\theoremstyle{plain} 
\newtheorem{theorem}{Theorem}[section]
\newtheorem{corollary}[theorem]{Corollary}
\newtheorem{lemma}[theorem]{Lemma}
\theoremstyle{definition} 
\newtheorem{example}[theorem]{Example}
\newtheorem{assumption}[theorem]{Assumption}
\theoremstyle{remark} 
\newtheorem{remark}[theorem]{Remark}
\numberwithin{equation}{section} 
\newcommand{\myitem}[1]{%
\item[#1]\protected@edef\@currentlabel{#1}%
}
\title[Finite element approximation of SPDEs with Whittle--Matérn noise]{Finite element approximation of parabolic SPDEs with Whittle--Matérn noise}
\author[Ø.~S.~Auestad]{Øyvind S. Auestad} 
\address[Øyvind S. Auestad]{\newline Department of Mathematical Sciences Norwegian University of Science and Technology, \newline 7034 Trondheim, Norway.} \email[(corresponding author)]{oyvinau@ntnu.no}
\author[G.-A.~Fuglstad]{Geir-Arne Fuglstad}
\address[Geir-Arne Fuglstad]{\newline Department of Mathematical Sciences Norwegian University of Science and Technology \newline NO--7491 Trondheim, Norway.} \email{geir-arne.fuglstad@ntnu.no}
\author[E.~R.~Jakobsen]{Espen R. Jakobsen}
\address[Espen R. Jakobsen]{\newline Department of Mathematical Sciences Norwegian University of Science and Technology \newline NO--7491 Trondheim, Norway.} \email{espen.jakobsen@ntnu.no}
\author[A.~Lang]{Annika Lang} \address[Annika Lang]{\newline Department of Mathematical Sciences Chalmers University of Technology and University of Gothenburg \newline S--412 96 G\"oteborg, Sweden.} \email{annika.lang@chalmers.se}
\begin{document}

\begin{abstract}
    We propose and analyse a new type of fully discrete finite element approximation of a class of linear stochastic parabolic evolution equations with additive noise. Our discretization differs from previous ones in that we use a finite element approximation of the noise, as opposed to an $L^2$ projection. This approximation is tailored for equations where the noise has covariance operator defined in terms of (negative powers of) elliptic operators, like Whittle--Matérn random fields. Strong convergence rates up to order~$2$ in space and~$1$ in time are shown and verified by numerical experiments in dimension $1$ and~$2$.
\end{abstract}

\date{\today}
\subjclass{65C30, 65C20, 35K10, 35K51, 60H15.}
\keywords{finite element methods, Matérn random fields, stochastic partial differential equations.}

\maketitle

\section{Introduction}

We consider linear stochastic evolution equations of the form,
\begin{align}\label{eq:model-in-intro}
    du = -A_1 u \, dt + A_2^{-\gamma} \, dW, \quad u(0) = \xi,
\end{align}
where $W$ is a cylindrical Wiener process on $L^2(\mathcal{D})$, $\mathcal{D} \subseteq \mathbb{R}^d$ is a bounded domain, $\gamma > d/4-1/2$ is a parameter, $\xi$ a random initial condition, and $A_j, \ j = 1, 2$ second order elliptic operators of the form, 
\begin{align*}
    A_j = \alpha_j - \nabla \cdot \mathcal{A}_j \nabla + b_j \cdot \nabla, \quad j = 1, 2,
\end{align*}
with bounded and measurable coefficients---for precise conditions, see Assumption \ref{assumption:model}. The model \eqref{eq:model-in-intro} has additive colored noise of Whittle--Matérn type, and is of particular interest in spatial statistics. In this paper we propose and analyse and a new type of fully discrete finite element approximation for this class of equations, which uses a combination of a finite element approximation and a quadrature to approximate the driving noise $A_2^{-\gamma} W$.

Strong convergence of fully discrete numerical approximations to mild solutions of linear and semilinear stochastic partial differential equations (SPDEs) has been studied for more than $25$~years starting with the first works by Gy\"ongy~\cite{1999-gyongy} and Shardlow~\cite{1999-shardlow}. While the convergence order in time of Euler--Maruyama schemes for SPDEs driven by multiplicative noise is limited to~$1/2$ as shown, e.g., in~\cite{kruse-2014}, additive noise opens up for up to order~$1$ as shown, e.g., in~\cite{2004-yan, 2016-wang} and for exponential integrators in~\cite{2019-lord}. Spatial discretization is usually done by spectral methods, finite differences, or finite element methods, see the reviews in the monographs \cite{lord, kruse, jentzen} and references therein. Common to most numerical discretizations in the literature is that the driving noise enters the scheme via the $L^2$-orthogonal projection onto some finite dimensional space. This can either be a finite element space in the case of a finite element approximation, or the span of suitably chosen eigenvectors in the case of a spectral approximation. However, neither of these choices usually results in schemes which are computable in practice. Finite element approximations require methods of simulating the projected colored noise, while spectral approximations require knowledge of the eigenvectors and eigenvalues of the covariance operator of the noise---both of which are often unavailable. First approaches to construct a computable finite element approximation have been made by an additional spectral approximation of the noise in~\cite{2010-lindgren, 2012-barth} and based on covariance kernels and interpolation between meshes, e.g., via fast Fourier transform, in~\cite{2022-petersson}. In this work, we use a direct finite element based approximation of the noise, which can quickly be computed on the same finite element space as the solution to the equation.

More precisely, for a finite element space $V_h$ with $L^2$-orthogonal projection $\pi_h$, and finite element approximations $A_{j,h}$ of $A_j$, our approximation~$u_h$ is based on the semidiscrete approximation
\begin{align}\label{eq:mild-solution-semi-approx}
    d u_h = -A_{1,h} u_h \, dt + A_{2,h}^{-\gamma} \pi_h \, dW, \quad u_h(0) = \pi_h \xi.
\end{align}
In contrast, the common way of approximating the additive noise would be by replacing the last term in \eqref{eq:mild-solution-semi-approx} by $\pi_h A_2^{-\gamma} \, dW$. By using a quadrature to approximate an integral representation of the fractional operator $A_{2,h}^{-\gamma}$, and a backward Euler approximation in time, we arrive at our fully discrete approximation (see \eqref{eq:fully-discrete-scheme}). 

Our approximation is tailored for SPDEs where the noise has covariance operator determined by a negative fractional power of a differential operator. This choice of covariance operator is motivated by applications in spatial statistics, where the arguably most popular and widely applied model is the Matérn random field \cite{2012-stein, 2024-porcu}, which corresponds to the case $\mathcal{A}_2 = \tau I$, $b_2 = 0$, $\alpha_2, \tau > 0$, and $\mathcal{D} = \mathbb{R}^d$. The formulation of Matérn random fields as solutions to SPDEs, dating back to \cite{1954-whittle}, allows for a natural generalization of the Matérn random field to bounded domains and more general covariance operators. This class of random fields is defined as solutions $u$ of $A_2^{\gamma} u = \mathcal{W}$, where $\mathcal{W}$ is white noise on $L^2(\mathcal{D})$, and are often referred to as Whittle--Matérn random fields \cite{2022-lindgren}, motivating the name ``Whittle--Matérn noise" in this paper. Whittle--Matérn random fields and their numerical approximation have been a popular subject of research in mathematics and statistics in recent years (see, e.g., \cite{2018-bolin, 2020-cox, 2024-bolin}, \cite{2011-rue, 2022-lindgren, 2024-lindgren} and the references therein).

We may view \eqref{eq:model-in-intro} as a model for the time development of a density subject to diffusion and advection, in addition to random sources and fluxes, defined by Whittle--Matérn random fields. SPDEs of this form are known in statistics as stochastic advection-diffusion equations, and are used as statistical models for physical phenomena in climate and environmental sciences, such as precipitation, pollution and temperature \cite{2010-wikle,2011-cressie,2012-sigrist,2022-foss}, to name a few. The study of these models is an active area of research in spatio-temporal statistics \cite{2024-lindgren,2022-liu,2024-clarotto,2024-berild}, where a key objective is to create physically motivated statistical models that are interpretable and computationally feasible for statistical inference. From a statistical perspective, the interpretability and linearity of \eqref{eq:model-in-intro} are favorable, as standard statistical inference requires the computation of likelihoods, which is typically not available for non-Gaussian and non-linear models.

In the spatio-temporal statistics literature, the motivation for studying models of the form \eqref{eq:model-in-intro} are applications involving real data. In \cite{2024-clarotto}, a finite element method that is a special case of the method developed in this paper is applied in the case $\gamma = 1$ and $b_2 = 0$, and \cite{2024-lindgren} considers the case $b_1=b_2=0$ with a finite element approach in time combined with the same spatial noise discretization for $\gamma \in \mathbb{N}$, in addition to allowing $\gamma \in \mathbb{N} - 1/2$ through a least squares method. Common for \cite{2024-clarotto}, \cite{2024-lindgren}, other work in statistics using finite volume methods \cite{2024-berild}, and spectral approaches \cite{2022-liu,2015-sigrist}, is that they are missing a rigorous formulation of the requirements on the SPDE \eqref{eq:model-in-intro} and numerical method, proofs of rates of convergence, in addition to numerical experiments verifying convergence rates. Therefore, this paper closes the gap between some of the numerical discretizations of \eqref{eq:model-in-intro} used in practice in spatio-temporal statistics, and available convergence results in the numerical analysis literature. The extension from integer $\gamma$ to general $\gamma > d/4 - 1/2$ is important because $\gamma$ provides a way to control spatial regularity of the noise, and thus the solution. The ability to control spatial regularity is key in applications and one of the main motivations for using the Matérn model \cite{2012-stein}. In Figure \ref{ga:example3:fig:NonStat} realizations of \eqref{eq:model-in-intro} are shown for different choices of $\gamma$ (see Example \ref{ga:example3} for details).

\begin{figure}
    \centering
    \subcaptionbox{$\gamma = 0.25$}{
        \includegraphics[width = 16cm]{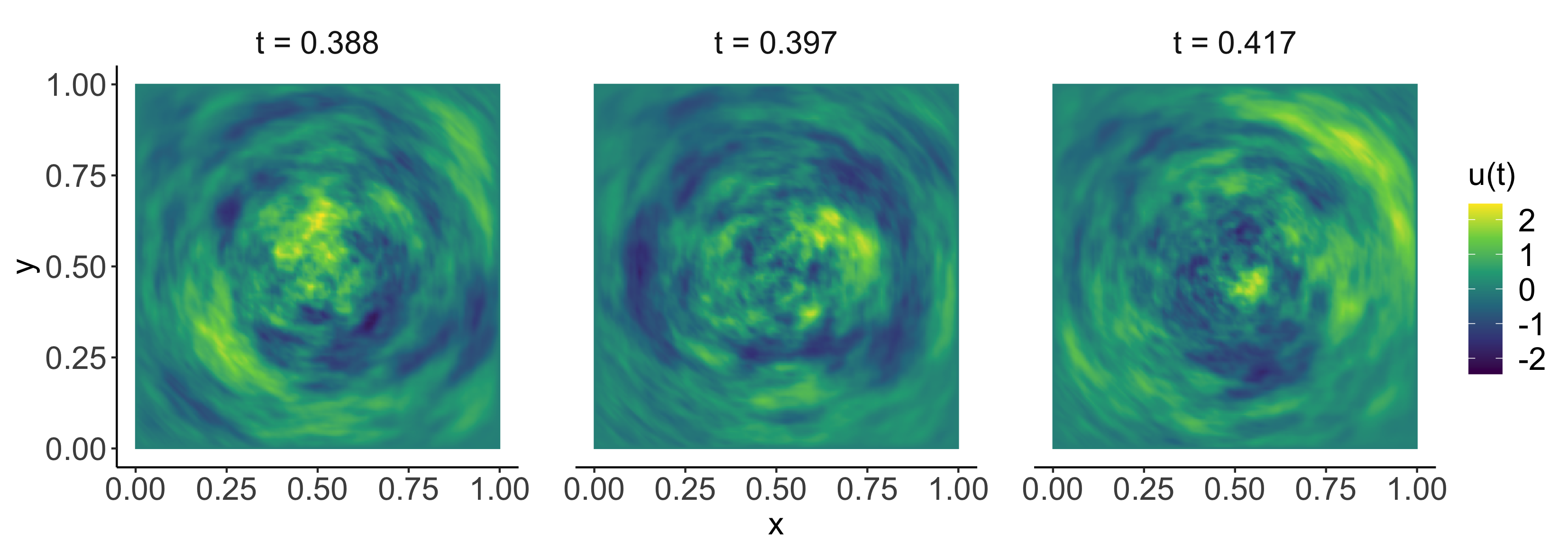}
    }\\
    \subcaptionbox{$\gamma = 0.75$}{
        \includegraphics[width = 16cm]{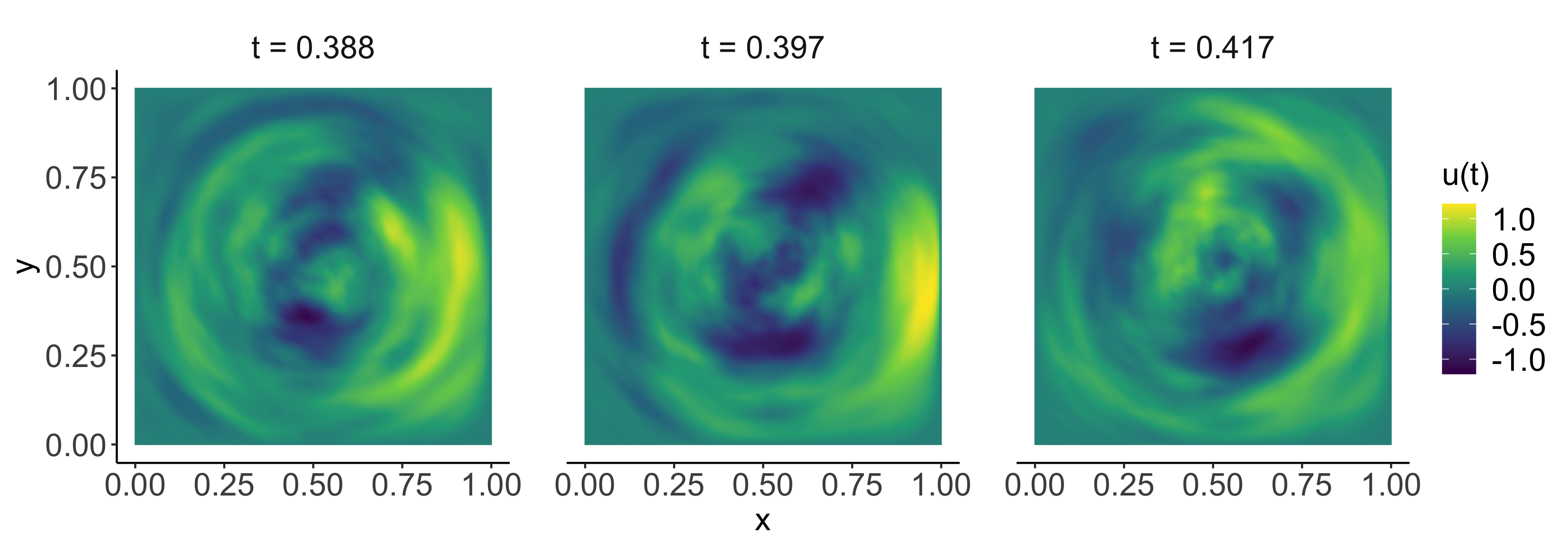}
    } \\
    \caption{Realizations of the SPDE \eqref{ga:eq:NonStatSPDE} as described in Example \ref{ga:example3}.\label{ga:example3:fig:NonStat}} 
\end{figure}

The main results of this paper can be summarized as follows:
\begin{enumerate}
    \item we propose a new and computationally efficient way of discretizing \eqref{eq:model-in-intro}, which contrary to existing discretizations, involves a finite element approximation of the additive noise,
    \item we derive strong and pathwise convergence rates for our proposed discretization, which are given in Theorem \ref{theorem:strong-rate} and Corollary \ref{cor:pathwise-rate}, respectively, and finally
    \item we verify the convergence rates obtained by numerical experiments using different values of $\gamma$ in one and two spatial dimensions.
\end{enumerate}
The paper is structured as follows: in Section 2 we state our assumptions on \eqref{eq:model-in-intro} (Assumption \ref{assumption:model}) and its numerical approximation (Assumption \ref{assumption:fem}). We also state a space and time regularity result of the mild solution of \eqref{eq:model-in-intro} (Lemma \ref{lemma:space-regularity-mild-solution} and Lemma \ref{lemma:time-regularity-mild-solution}). In Section 3 we outline our finite element approximation of \eqref{eq:model-in-intro}. We also state our strong and pathwise convergence results for this approximation (Theorem \ref{theorem:strong-rate} and Corollary \ref{cor:pathwise-rate}). Section 4 contains the proof of Theorem \ref{theorem:strong-rate}, and Section 5 numerical experiments verifying the convergence rate obtained in Theorem \ref{theorem:strong-rate} and Corollary \ref{cor:pathwise-rate}.

\section{Preliminaries and notation}

For separable Hilbert spaces $U, H$, we denote by $L(U,H)$ the Banach space of bounded linear operators from $U$ to $H$ with the usual norm, with convention $L(H) := L(H,H)$. We denote by $L_2(U,H)$ the Hilbert space of Hilbert--Schmidt operators from $U$ to $H$, with inner product,
\begin{align*}
    (A,B)_{L_2(H)} := \sum_{j = 1}^{\infty}(A e_j, B e_j)_H,
\end{align*}
for any orthonormal basis $\{ e_j \}_{j = 1}^{\infty}$ of $U$, with convention $L_2(H) := L_2(H,H)$. Throughout the paper, we will denote by $C$ a generic constant, which may change from line to line. Parameter dependence of $C$ will be denoted by subscripts, but is omitted unless relevant. We will also frequently and without further explanation consider $B \in L(H)$ with the property that $BA \in L(H)$ for some possibly unbounded and densely defined linear $A : D(A) \subseteq H \to H$, with a bounded inverse. By this we understand that $B A$ extends to $H$ from $D(A)$ as a bounded linear operator, and this extension is denoted by $BA$.

In what follows, we fix a filtered probability space, $(\Omega, \mathcal{F},\{\mathcal{F}_t\}_{t \in [0,T]}, P)$, fulfilling the usual conditions, and we let $W$ be a cylindrical Wiener process on $H$ (covariance operator $I$) adapted to the filtration $\{\mathcal{F}_t\}_{t \in [0,T]}$. Whenever we consider Itô integrals in the following, it will involve this cylindrical Wiener process. Further, we denote by $L^p(\Omega ; H)$ $(p \geq 1)$ the Banach space of equivalence classes of measurable functions $(\Omega, \mathcal{F}) \to (H, \mathcal{B}(H))$, with norm
\begin{align*}
    \Vert f \Vert_{L^p(\Omega ; H)}^p := E[\Vert f \Vert_H^p].
\end{align*}

Finally, we recall some properties of generators of variational semigroups. For another Hilbert space $V$ with $V \subseteq H$ densely and continuously, let $a : V \times V \to \mathbb{R}$ be a continuous bilinear form, with the property that the shifted bilinear form, $a(\cdot, \cdot) + \lambda (\cdot, \cdot)_H$, is coercive for $\lambda \geq 0$ large enough. The operator, $-A : D(A) \to H$, $D(A) \subseteq V$ defined by $(A u, v)_H = a(u, v)$ for any $u \in D(A)$ and $v \in V$ is called the generator of a variational semigroup on $H$, and is a subclass of generators of analytic semigroups. This semigroup is defined by the Dunford integral,
\begin{align*}
    S(t) := \frac{1}{2\pi i} \int_{-\lambda + \gamma} e^{-zt} (z - A)^{-1} \, dz,
\end{align*}
where $\gamma := \{ s e^{\pm i \delta}, \ s \geq 0 \}$ for some $\delta \in (0, \pi / 2)$ large enough. We can define fractional powers of $\lambda + A$ by, 
\begin{align}\label{eq:fractional-powers}
\begin{split}
    (\lambda + A)^{-\alpha} &:= \frac{1}{\Gamma(\alpha)} \int_0^{\infty} t^{-1 + \alpha} e^{-\lambda t} S(t) \, dt, \\
    (\lambda + A)^{\alpha} &:= \frac{\sin(\alpha \pi)}{\pi} \int_0^{\infty} t^{-1 + \alpha} (t + \lambda + A)^{-1} (\lambda + A) \, dt, 
\end{split}
\end{align}
for $\alpha \in (0,1)$. See e.g. Equation 6.9 and Theorem 6.9 in \cite{pazy} (the expression for $(\lambda + A)^{-\alpha}$ also holds for any $\alpha \geq 1$). The following properties of generators of analytic semigroups will be used frequently. 
\begin{lemma}\label{lemma:analytic-semigroup}
    Let $\lambda$, $A$ be as above, and $S(\cdot)$ the analytic semigroup generated by $-A$. Then,
    \begin{enumerate}
        \item[(a)] there is $C, \delta > 0$ such that for any $\alpha \geq 0$, $t > 0$ and $u \in H$
        \begin{align*}
            S(t) u \in D((\lambda + A)^{\alpha}) \quad \text{with} \quad \Vert (\lambda + A)^{\alpha} S(t) u \Vert_H \leq C e^{(\lambda-\delta)t} t^{-\alpha} \Vert u \Vert_H,
        \end{align*}
        with convention $(\lambda + A)^0 = I$, \medskip

        \item[(b)] for $\alpha, \beta \in \mathbb{R}$, $u \in D((\lambda + A)^{\gamma})$ with $\gamma = \max(\alpha, \beta, \alpha + \beta)$,
        \begin{align*}
            (\lambda + A)^{\alpha} (\lambda + A)^{\beta} u = (\lambda + A)^{\beta} (\lambda + A)^{\alpha} u = (\lambda + A)^{\alpha + \beta} u,
        \end{align*}
        
        \item[(c)] for $\alpha \in \mathbb{R}$, $D((\lambda + A)^{\alpha})$ is dense in $H$, \medskip
        
        \item[(d)] for $\alpha \in \mathbb{R}$, $(\lambda + A)^{\alpha} S(t) u = S(t) (\lambda + A)^{\alpha} u$ for $u \in D((\lambda + A)^{\alpha})$, \medskip
        
        \item[(e)] and finally for $\alpha \in [0,1]$,
        \begin{align*}
            \Vert (S(t) - I) u \Vert_H &\leq C e^{\lambda t} t^{\alpha} \Vert (\lambda + A)^{\alpha} u \Vert_H.
        \end{align*}
    \end{enumerate}
\end{lemma}
\begin{proof}
    The first four statements can be found in e.g. Chapter 2 (Theorem 6.8 and 6.13) in \cite{pazy}. The last one follows by similar arguments as in the case of $\lambda = 0$ (see e.g. \cite{2024-auestad}). 
\end{proof}

In what follows, we let $\mathcal{D} \subseteq \mathbb{R}^d$, $d \leq 3$ be a bounded polygonal domain, $H := L^2(\mathcal{D})$ and $V$ equal to some subspace of $H^1(\mathcal{D})$, where $H^s(\mathcal{D})$, $s \in \mathbb{N}$, is defined as the completion of $C^{\infty}(\mathcal{D})$ using the norm given by the inner product, $(u, v)_{H^s} := \sum_{\vert \alpha \vert \leq s} (\partial^{\alpha} u, \partial^{\alpha} v)_{L^2}$ where $\alpha \in \mathbb{N}_0^d$ is a multi-index.

\subsection*{Model and mild solution}

We make the following assumptions on \eqref{eq:model-in-intro}.
\begin{assumption}\label{assumption:model}
\hfill
\begin{description}
    \myitem{(M1)} $A_j : D(A_j) \to H$ in \eqref{eq:model-in-intro} are related to the bilinear forms $a_j : V \times V \to \mathbb{R}$, where, 
    \begin{align*}
        a_j(u, v) := \int_{\mathcal D} \mathcal{A}_j \nabla u \cdot \nabla v + (b_j \cdot \nabla u) v + \alpha_j uv \, d x, \quad j = 1, 2,
    \end{align*} 
    with $\mathcal{A}_j(x) \in \mathbb{R}^{d\times d}$, $b_j(x) \in \mathbb{R}^d$ and $\alpha_j(x) \in \mathbb{R}$. \medskip 
    \myitem{(M2)}\label{cond:continuity-coercivity} There is $\lambda \geq 0$ such that the shifted bilinear form, $a_1(\cdot, \cdot) + \lambda (\cdot,\cdot)_H$, is coercive and continuous on $V$, while $a_2(\cdot,\cdot)$ is coercive and continuous on $V$. \medskip
    \myitem{(M3)}\label{cond:hilbert-schmidt} For any $\alpha < -d/4$, there is $C_{\alpha} > 0$, such that $\Vert A_2^{\alpha} \Vert_{L_2(H)} \leq C_{\alpha}$. \medskip
    \myitem{(M4)}\label{cond:A1A2} For some $C > 0$, $\Vert (\lambda + A_1) A_2^{-1} \Vert_{L(H)} \leq C$ and $\Vert (\lambda + A_1)^{-1/2} A_2^{1/2} \Vert_{L(H)} \leq C$.
    \medskip
    \myitem{(M5)}\label{cond:gamma-size} $\gamma > d/4 - 1/2$. \medskip
    \myitem{(M6)} $\xi$ is $\mathcal{F}_0$-measurable and in $L^p(\Omega ; H)$ for some $p \geq 2$. \medskip
\end{description}
\end{assumption}

\begin{remark} \hfill
\begin{enumerate}[(a)]
    \item
    A sufficient condition for $a_j(\cdot, \cdot), \ j = 1, 2$ to satisfy \ref{cond:continuity-coercivity} is,
    \begin{enumerate}[(i)]
    \item $\vert \mathcal{A}_j \vert$, $\vert b_j \vert$ and $\alpha_j$ are in $L^{\infty}(\mathcal D)$, $j = 1, 2$, where $\vert \cdot \vert$ is the Euclidean norm, and 
    \item there is $c > 0$ such that for any $y \in \mathbb{R}^d$, and $y_0 \in \mathbb{R}$, 
    \begin{align}
    \label{eq:elliptic-condition}
        &y^T \mathcal{A}_1(x) y \geq c \vert y \vert^2, \qquad &&\text{for a.e. } x \in \mathcal{D}, \\
    \label{eq:coercive-condition}
        &y^T \mathcal{A}_2(x) y + b_2(x) \cdot y y_0 + \alpha_2(x) y_0^2 \geq c (\vert y \vert^2 + y_0^2), \qquad &&\text{for a.e. } x \in \mathcal{D}.
    \end{align}
    \end{enumerate}
    in which case one sees that 
    \begin{align*}
        a_j(u, v) \leq C \Vert u \Vert_{H^1} \Vert v \Vert_{H^1}, \quad a_1(u, u) + \lambda \Vert u \Vert_{L^2} \geq \frac{c}{2} \Vert u \Vert_{H^1}, \quad a_2(u, u) \geq c \Vert u \Vert_{H^1}^2,
    \end{align*}
    with $C = \Vert \mathcal{A}_j \Vert_{L^{\infty}} + \Vert b_j \Vert_{L^{\infty}} + \Vert \alpha_j \Vert_{L^{\infty}}$ (where $\Vert \mathcal{A}_j \Vert_{L^{\infty}} := \mathrm{ess \ sup}_{x \in \mathcal{D}} \vert \mathcal{A}_j(x) \vert$, and similarly for the other norms), provided $\lambda \geq \Vert \alpha_1 \Vert_{L^{\infty}} + \frac{1}{2c} \Vert b_1 \Vert_{L^{\infty}} + \frac{c}{2}$. If $V$ is $H_0^1(\mathcal D)$, the subspace of $H^1(\mathcal D)$ with zero Dirichlet boundary conditions, \eqref{eq:coercive-condition} can be replaced by the weaker condition $y^T \mathcal{A}_2(x) y + b_2(x) \cdot y y_0 + \alpha_2(x) y_0^2 \geq c \vert y \vert^2$ for a.e. $x \in \mathcal{D}$.

    \item
    Condition \ref{cond:hilbert-schmidt} holds in the case of $\mathcal{D} = (0,1)^d$, when $A_2 = -\Delta$ with zero Dirichlet boundary conditions, or $A_2 = I - \Delta$ with zero Neumann boundary conditions. Ordering and enumerating the eigenvalues $\lambda_j$ of $A_2$ in these cases, we find
    \begin{align*}
        c j^{2/d} \leq \lambda_j \leq C j^{2/d}, \quad j = 1, \dots, 
    \end{align*}
    for some $C, c > 0$, which in turn gives for any $\epsilon > 0$ small, 
    \begin{align*}
        \Vert (-\Delta)^{-d/4 - \epsilon} \Vert_{L_2(H)}^2 \leq C \sum_{j = 1}^{\infty} j^{\frac{4}{d} (-\frac{d}{4} - \epsilon)} < \infty. 
    \end{align*}
    More generally, it may be seen that this condition holds for $A_2$ selfadjoint in the case of general domains $\mathcal D$ where the coefficients of $\mathcal{A}_2$ and $\alpha_2$ are only required to be bounded (see Theorem 8.37 in \cite{gilbarg} and Theorem 6.3.1 in \cite{davies}). In the case of smooth domains $\mathcal D$ and when $\mathcal A_2$ has entries in $C^1(\overline{\mathcal D})$ and $\alpha_2$ is in $C(\overline{\mathcal D})$, see also Theorem 7.13 and Remark C.16 in \cite{haroske}. When $b_2 \neq 0$ and $A_2$ is not selfadjoint, some results are given in Theorem 7.15 in \cite{haroske}. 
    
    \item
    The first inequality of \ref{cond:A1A2} holds if $D(\lambda + A_1) = D(A_2) = H^2(\mathcal{D}) \cap V$, and $A_2$ satisfies the elliptic regularity estimate $\Vert A_2^{-1} f \Vert_{H^2} \leq C \Vert f \Vert_{L^2}$ for any $f \in L^2$. This is e.g. the case when $\mathcal{D}$ is convex, and $A_1$ and $A_2$ are selfadjoint with $C^1(\overline{\mathcal{D}})$ coefficients (see Remark 2(a) for precise conditions). The second inequality holds if $A_1$ and $A_2$ are selfadjoint and have the same boundary conditions. Owing to \ref{cond:continuity-coercivity} we have norm equivalence between $\Vert (\lambda + A_1)^{1/2} \cdot \Vert_{L^2}, \Vert A_2^{1/2} \cdot \Vert_{L^2}$ on $V$ (as both are equivalent to the $H^1$-norm), and an argument similar to that in Remark 2(c) below gives the inequality in this case. 
\end{enumerate}
\end{remark}

We are interested in the mild solution of \eqref{eq:model-in-intro}, which is defined by the stochastic convolution
\begin{align}\label{eq:mild-form}
    u(t) = S_1(t) \xi + \int_0^{t} S_1(t - s) A_2^{-\gamma} dW, \quad P\text{-a.s. for any } t \in [0,T],
\end{align}
where $S_1(\cdot)$ is the semigroup generated by $-A_1$. The next lemma asserts that we have a mild solution under Assumption \ref{assumption:model}. The proof is standard, and can be found in Appendix \ref{app:prelim-proofs}.
\begin{lemma}\label{lemma:mild-solution-existence}
    Under Assumption \ref{assumption:model}, the model \eqref{eq:model-in-intro} has a unique mild solution in $L^p(\Omega ; H)$ for any $p \geq 2$. Further, for any $\rho \leq 0$ there is $C_p, \delta > 0$ such that
    \begin{align*}
        \Vert u(t) \Vert_{L^p(\Omega ; H)} \leq C_p e^{\lambda t} (1 + e^{-\delta t} t^{\rho}\Vert (\lambda + A_1)^{\rho} \xi \Vert_{L^p(\Omega ; H)}).
    \end{align*}
\end{lemma}
The following two lemmas describe the space and time regularity of the mild solution, respectively. The proofs are standard, and the details can be found in Appendix \ref{app:prelim-proofs}.
\begin{lemma}\label{lemma:space-regularity-mild-solution}
    Suppose Assumption \ref{assumption:model} holds, and let $u$ be the mild solution to \eqref{eq:model-in-intro}. For $p \geq 2$, $\alpha \in [0, \gamma + 1/2 - d/4) \cap [0,3/2)$, and $\rho \leq \alpha$, there is $C_{p,\alpha}, \delta > 0$ such that,
    \begin{align*}
        \Vert (\lambda + A_1)^{\alpha} u(t) \Vert_{L^p(\Omega ; H)} \leq C_{p,\alpha} e^{\lambda t} (1 + e^{-\delta t} t^{-\alpha + \rho} \Vert (\lambda + A_1)^{\rho} \xi \Vert_{L^p(\Omega ; H)}),
    \end{align*}
    As a consequence, $u(t) \in D((\lambda + A_1)^{\alpha}), \, t > 0$ $P$-a.s. for any $\alpha$ in this interval. 
\end{lemma}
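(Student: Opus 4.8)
The plan is to apply the fractional power $(\lambda+A_1)^\alpha$ to the mild solution formula \eqref{mild-form} and estimate the three resulting terms in $L^p(\Omega;H)$ separately. Since $(\lambda+A_1)^\alpha$ is closed, it interchanges with the Bochner integral (Hille's theorem) and with the stochastic integral (a standard property, see e.g.\ \cite{da-prato}) once one knows the resulting integrands are suitably integrable; the estimates below establish this, giving
\begin{align*}
    (\lambda+A_1)^\alpha u(t) = (\lambda+A_1)^\alpha S_1(t)\xi + \int_0^t (\lambda+A_1)^\alpha S_1(t-s) F(u)\, ds + \int_0^t (\lambda+A_1)^\alpha S_1(t-s) A_2^{-\gamma}\, dW .
\end{align*}
We may assume $\Vert (\lambda+A_1)^\rho \xi \Vert_{L^p(\Omega;H)} < \infty$, since otherwise there is nothing to prove, and we repeatedly use that $e^{\lambda(t-s)} \le e^{\lambda T}$ on $[0,T]$ together with the commutation of fractional powers of $\lambda+A_1$ with $S_1$ from Lemma \ref{lemma:analytic-semigroup}.

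For the first term, write $(\lambda+A_1)^\alpha S_1(t)\xi = (\lambda+A_1)^{\alpha-\rho} S_1(t) (\lambda+A_1)^\rho \xi$ (valid since $\rho \le \alpha$) and apply \eqref{eq:analytic-semigroup-property-1-2} with exponent $\alpha-\rho \ge 0$ to obtain $\Vert (\lambda+A_1)^\alpha S_1(t)\xi \Vert_H \le C\, t^{-(\alpha-\rho)} \Vert (\lambda+A_1)^\rho \xi \Vert_H$; taking $L^p(\Omega)$-norms yields the leading term of the claimed bound. For the deterministic convolution, \eqref{eq:analytic-semigroup-property-1-2} with exponent $\alpha < 1$ together with the linear growth in \ref{cond:linear-growth-and-lipschitz} give $\Vert (\lambda+A_1)^\alpha S_1(t-s) F(u(s)) \Vert_H \le C(t-s)^{-\alpha}(1+\Vert u(s)\Vert_H)$; since $\int_0^t (t-s)^{-\alpha}\,ds \le C_T$ for $\alpha<1$, integrating and invoking the a priori bound \eqref{eq:mild-solution-estimate} from Lemma \ref{lemma:mild-solution-existence} bounds this term in $L^p(\Omega;H)$ by $C(1+\Vert\xi\Vert_{L^p(\Omega;H)})$.

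The stochastic convolution is the main term, and it is here that the hypothesis $\alpha < \gamma + 1/2 - d/4$ is used. By the Burkholder--Davis--Gundy inequality (the It\^o isometry when $p=2$), and since the integrand is deterministic,
\begin{align*}
    \Big\Vert \int_0^t (\lambda+A_1)^\alpha S_1(t-s) A_2^{-\gamma}\, dW \Big\Vert_{L^p(\Omega;H)} \le C_p \Big( \int_0^t \Vert (\lambda+A_1)^\alpha S_1(t-s) A_2^{-\gamma} \Vert_{L_2(H)}^2 \, ds \Big)^{1/2}.
\end{align*}
Choose $r$ with $\max(d/4, \gamma-\alpha) < r < 1/2 + \gamma - \alpha$, which is possible precisely because $\alpha < \gamma + 1/2 - d/4$. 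Writing $A_2^{-\gamma} = A_2^{-(\gamma-r)} A_2^{-r}$ and using $\Vert BK \Vert_{L_2(H)} \le \Vert B \Vert_{L(H)} \Vert K \Vert_{L_2(H)}$,
\begin{align*}
    \Vert (\lambda+A_1)^\alpha S_1(t-s) A_2^{-\gamma} \Vert_{L_2(H)} \le \Vert (\lambda+A_1)^{\alpha-\gamma+r} S_1(t-s) \Vert_{L(H)} \, \Vert (\lambda+A_1)^{\gamma-r} A_2^{-(\gamma-r)} \Vert_{L(H)} \, \Vert A_2^{-r} \Vert_{L_2(H)} .
\end{align*}
The middle factor is bounded by \ref{cond:A1A2}, the last by \ref{cond:hilbert-schmidt} since $r > d/4$, and the first by $C(t-s)^{-(\alpha-\gamma+r)}$ by \eqref{eq:analytic-semigroup-property-1-2}, whose exponent $\alpha-\gamma+r$ is $\ge 0$ because $r > \gamma - \alpha$. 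As $r < 1/2 + \gamma - \alpha$ gives $2(\alpha-\gamma+r) < 1$, the time integral $\int_0^t (t-s)^{-2(\alpha-\gamma+r)}\,ds$ is finite and bounded uniformly on $[0,T]$, so this term is bounded by a constant $C_{\alpha,p}$.

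Combining the three estimates gives
\begin{align*}
    \Vert (\lambda+A_1)^\alpha u(t) \Vert_{L^p(\Omega;H)} \le C\, t^{-(\alpha-\rho)} \Vert (\lambda+A_1)^\rho \xi \Vert_{L^p(\Omega;H)} + C(1+\Vert\xi\Vert_{L^p(\Omega;H)}) + C_{\alpha,p},
\end{align*}
and the middle summand is absorbed into the claimed form using $\Vert\xi\Vert_{L^p(\Omega;H)} \le \Vert (\lambda+A_1)^{-\rho}\Vert_{L(H)} \Vert (\lambda+A_1)^\rho \xi \Vert_{L^p(\Omega;H)}$ together with $1 \le T^{\alpha-\rho} t^{-(\alpha-\rho)}$ for $t \in (0,T]$. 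Taking $\rho=0$, for which the right-hand side is finite since $\xi \in L^p(\Omega;H)$ and $t>0$, shows $(\lambda+A_1)^\alpha u(t)$ is $P$-a.s.\ a well-defined element of $H$, i.e.\ $u(t) \in D((\lambda+A_1)^\alpha)$ $P$-a.s. The main obstacle is the operator splitting in the stochastic-convolution term: one must find $r$ simultaneously large enough to make $A_2^{-r}$ Hilbert--Schmidt and small enough to keep the time singularity integrable, which is possible exactly under the stated bound on $\alpha$; the bookkeeping needed to pull $(\lambda+A_1)^\alpha$ inside the two integrals is a secondary, routine point.
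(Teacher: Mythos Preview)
Your proof is correct and follows essentially the same approach as the paper: split the mild formula into the three terms, use the analytic smoothing estimate \eqref{eq:analytic-semigroup-property-1-2} on each, invoke \ref{cond:A1A2} and \ref{cond:hilbert-schmidt} after factoring $A_2^{-\gamma}$ in the stochastic convolution, and absorb the $\Vert\xi\Vert_{L^p(\Omega;H)}$ contribution via $\Vert\xi\Vert_H\le C\Vert(\lambda+A_1)^\rho\xi\Vert_H$. The only cosmetic difference is that the paper fixes the splitting parameter as $d/4+\epsilon$ whereas you choose a general $r\in(\max(d/4,\gamma-\alpha),\,1/2+\gamma-\alpha)$; these are equivalent.
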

\begin{lemma}\label{lemma:time-regularity-mild-solution}
    Suppose Assumption \ref{assumption:model} holds, and let $u$ be the mild solution to \eqref{eq:model-in-intro}. For $p \geq 2$, $0 \leq t_1 \leq t_2$, $\alpha \in [0,\gamma + 1/2 - d / 4) \cap [0,1/2]$ and $\rho \leq \alpha$, there is $C_{p,\alpha}, \delta > 0$ such that, 
    \begin{align*}
        \Vert u(t_2) - u(t_1) \Vert_{L^p(\Omega ; H)} \leq C_{p,\alpha} e^{\lambda t_2} (t_2-t_1)^{\alpha} (1 + e^{-\delta t_1} t_1^{-\alpha + \rho} \Vert (\lambda +  A_1)^{\rho} \xi \Vert_{L^p(\Omega ; H)}).
    \end{align*}
    As a consequence, $u$ has $\beta$-Hölder continuous trajectories $P$-a.s. for any $\beta \in [0,\gamma + 1/2 - d/4) \cap [0,1/2)$. 
\end{lemma}

\subsection*{Finite element approximation}
In order to approximate $A_j$, we introduce finite dimensional subspaces $V_h \subseteq V$, with $N_h := \mathrm{dim}(V_h)$, consisting of piecewise first order polynomials defined on a regular collection of simplices with maximum diameter $h$. We make the following standard assumption on $V_h$ and the finite element approximation $A_{j,h}$ of $A_j$.
\begin{assumption}\label{assumption:fem}
\hfill
\begin{description}
    \myitem{(N1)}\label{cond:quasi-uniform} $V_h \subseteq V$ consists of continuous functions that are first order polynomials when restricted to $\tau \in \mathcal{T}_h$, where $\mathcal{T}_h$ is a collection of simplices with disjoint interior, satisfying $\bigcup_{\tau \in \mathcal{T}_h} \tau = \overline{\mathcal{D}}$, and for some $C > 0$,
    \begin{align*}
        C^{-1} h \leq 2 \rho(\tau) \leq \text{diam}(\tau) \leq 2 r(\tau) \leq C h, \quad \tau \in \mathcal{T}_h,
    \end{align*}
    where $\rho, r$ are the radii of the incircle and circumcircle, respectively. \medskip
    \myitem{(N2)}\label{cond:discrete-operator} The finite dimensional operators $A_{j,h} : V_h \to V_h$ are defined by
    \begin{align*}
        (A_{j,h} u, v)_H = a_j(u,v), \quad \text{for any } u, v \in V_h.
    \end{align*}
    \myitem{(N3)}\label{cond:solution-operator} For some $\lambda \geq 0$, and with $\pi_h$ the $H$-orthogonal projection onto $V_h$,
    \begin{align*}
        \Vert ((\lambda + A_1)^{-1} - (\lambda + A_{1,h})^{-1} \pi_h) (\lambda + A_1)^{\alpha} f \Vert_H \leq C h^{2-2\alpha} \Vert f \Vert_H, 
    \end{align*}
    and, 
    \begin{align*}
        \Vert A_2^{\alpha} (A_2^{-1} - A_{2,h}^{-1} \pi_h) f \Vert_H \leq C h^{2 - 2\alpha} \Vert f \Vert_H,
    \end{align*}
    for $\alpha \in \{0, 1/2\}$, and some $C > 0$. \medskip
    \myitem{(N4)}\label{cond:fem-bound} There is $C > 0$ such that for $f \in H$, $\Vert (\lambda + A_{1,h})^{-1/2} \pi_h (\lambda + A_1)^{1/2} f \Vert_H \leq C \Vert f \Vert_H$.
\end{description}
\end{assumption}
\begin{remark}\label{remark:solution-operator} \hfill
\begin{enumerate}[(a)]
    \item
    We interpret \ref{cond:solution-operator} as a condition on the rate of convergence of the error $u - u_h$, where $(\lambda + A_1) u = f$ while $(\lambda + A_{1,h}) u_h = \pi_h f$. In the first inequality of \ref{cond:solution-operator}, different $\alpha$ corresponds to different regularity of $f$, while in the second it corresponds to different norms used for the difference $u - u_h$. It is also common, like in Chapter 7.3 in \cite{yagi}, to formulate condition \ref{cond:solution-operator} in terms of the Ritz projection. For $A_1$, this would be defined by $R_h := (\lambda + A_{1,h})^{-1} \pi_h (\lambda + A_1)$. Sufficient conditions for \ref{cond:solution-operator} to hold are for example:
    \begin{enumerate}[(i)]
        \item $\mathcal{D} \subseteq \mathbb{R}^d, d \leq 3$, is bounded, polygonal and convex,
        \item $V$ is the closed subspace of $H^1(\mathcal D)$ with either zero Neumann or Dirichlet boundary conditions,
        \item $A_j$ is selfadjoint,
        \item the entries of $\mathcal{A}_j$ are in $C^1(\overline{\mathcal{D}})$, and $\alpha_j$ are in $L^{\infty}(\mathcal D)$,
        \item there is $c, c_0 > 0$ such that $y^T \mathcal{A}_j(x) y \geq c \vert y \vert^2$ for any $x \in \overline{\mathcal D}$, $y \in \mathbb{R}^2$, while $\alpha_2 \geq c_0 > 0$ for a.e. $x \in \mathcal{D}$.
    \end{enumerate}
    For the case of $d = 2$ and zero Neumann boundary conditions see e.g. subsection 7.3.4 in \cite{yagi}, in particular Equation 7.58. The other cases follow by similar computations, noting that for $d \leq 3$ the interpolant on the finite element mesh, $I_h f$, of $f \in H^2(\mathcal{D}) \subseteq C(\overline{\mathcal{D}})$, satisfies $\Vert f - I_h f \Vert_{H^1} \leq C h \Vert f \Vert_{H^2}$ (see e.g. Theorem 4.4.4 in \cite{brenner}).
    \item
    Condition \ref{cond:fem-bound} holds in the case of $A_1$ selfadjoint. To see this, assume without loss of generality that $\lambda = 0$. Note that for any $u \in V_h$, the following identity holds
    \begin{align}\label{eq:remark-fem-bound}
        \Vert A_{1,h}^{1/2} u \Vert_H^2 = (A_{1,h}^{1/2} u, A_{1,h}^{1/2} u)_H = (A_{1,h} u, u)_H = a_1(u,u) = \Vert A_1^{1/2} u \Vert_H^2. 
    \end{align}
    Therefore, by the Cauchy--Schwarz inequality and \eqref{eq:remark-fem-bound}, we have for any $f \in H$ (using that for any $g \in V_h$ we have $\Vert g \Vert_H = \sup_{\Vert \varphi \Vert_H = 1, \varphi \in V_h} (g, \varphi)_H$\footnote{By choosing $\varphi = g / \Vert g \Vert_H \in V_h$ we find $\sup_{\Vert \varphi \Vert_H = 1, \varphi \in V_h} (g, \varphi)_H \geq \Vert g \Vert_H$, while $\sup_{\Vert \varphi \Vert_H = 1, \varphi \in V_h} (g, \varphi)_H \leq \sup_{\Vert \varphi \Vert_H = 1} (g, \varphi)_H = \Vert g \Vert_H$.}),
    \begin{align*}
        \Vert A_{1,h}^{-1/2} \pi_h A^{1/2} f \Vert_H &= \sup_{\Vert \varphi \Vert_H = 1, \varphi \in V_h} (A_{1,h}^{-1/2} \pi_h A_1^{1/2} f, \varphi)_H \\
        &= \sup_{\Vert \varphi \Vert_H = 1, \varphi \in V_h} (f, A_1^{1/2} A_{1,h}^{-1/2} \varphi)_H \\
        &\leq \sup_{\Vert \varphi \Vert_H = 1, \varphi \in V_h} \Vert f \Vert_H \Vert A_1^{1/2} A_{1,h}^{-1/2} \varphi \Vert_H \\
        &\leq \Vert f \Vert_H.
    \end{align*}
    \item
    A consequence of \ref{cond:quasi-uniform} is the estimate,
    \begin{align*}
        \Vert u \Vert_{H^1(\mathcal D)} \leq C h^{-1} \Vert u \Vert_H, \quad u \in V_h,
    \end{align*}
    for some $C > 0$ independent of $h$ (see e.g. Theorem 3.2.6 in \cite{ciarlet}). This in turn gives for any $u \in V_h$, using \ref{cond:discrete-operator} and the continuity of the bilinear forms $a_j$, 
    \begin{align*}
        \Vert A_{j,h} u \Vert_H &= \sup_{\Vert \varphi \Vert_H = 1, \varphi \in V_h} (A_{j,h} u, \varphi)_H \\
        &= \sup_{\Vert \varphi \Vert_H = 1, \varphi \in V_h} a_j(u, \varphi) \\ 
        &\leq \sup_{\Vert \varphi \Vert_H = 1, \varphi \in V_h} C \Vert u \Vert_{H^1(\mathcal D)} \Vert \varphi \Vert_{H^1(\mathcal D)} \\
        &\leq C h^{-2} \Vert u \Vert_H.
    \end{align*}
\end{enumerate}
\end{remark}

\subsection*{Quadrature approximation of fractional operator}

In order to approximate the fractional power operator, we use the quadrature in \cite{2019-bonito}. To that end, let
\begin{align}\label{eq:quadrature}
    Q_k^{-\gamma}(A_2) := \frac{k \sin(\pi \gamma)}{\pi} \sum_{j = -M}^N e^{(1-\gamma) y_j } (e^{y_j} I + A_2)^{-1}.
\end{align}
Here, $k > 0$ is the quadrature resolution, $y_j = j k, \ j = -M, \dots, N$, and
\begin{align*}
    N = \bigg\lceil \frac{\pi^2}{2 \gamma k^2} \bigg\rceil, \quad M = \bigg\lceil \frac{\pi^2 }{2(1 - \gamma) k^2} \bigg\rceil.
\end{align*}
The following lemma describe the convergence rate of this approximation. 
\begin{lemma}\label{lemma:A_h-quadrature}
Let $A_{2,h}$ be as in \ref{cond:discrete-operator}, $Q_k^{-\gamma}$ be given by \eqref{eq:quadrature}, and $\gamma \in (0,1)$. Then, for some $C, c > 0$ independent of $k$ and $h$, the following estimate holds,
\begin{align}
    \Vert (A_{2,h}^{-\gamma} - Q_k^{-\gamma}(A_{2,h})) u \Vert_H \leq C e^{-c/k} \Vert u \Vert_H, \quad \text{for any } u \in V_h.
\end{align}
\end{lemma}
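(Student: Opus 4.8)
The plan is to show that the error analysis behind Lemma~\ref{lemma:quadrature} (Theorem 3.2 in \cite{2018-bonito-sinc}) carries over verbatim with $A_2$ replaced by $A_{2,h}$, and — crucially — that the resulting constants $C,c$ can be chosen independently of $h$. The sinc quadrature \eqref{eq:quadrature} and its error bound use the operator only through two structural facts: a strictly positive lower spectral bound, and sectoriality of the resolvent, i.e. a bound $\Vert (z + A)^{-1} \Vert_{L(H)} \le C_\theta/|z|$ for $z$ in a fixed sector, which in turn produces a holomorphic extension of $y \mapsto e^{(1-\gamma)y}(e^y I + A)^{-1}$ to a horizontal strip $\{ \vert \mathrm{Im}\, y \vert < d \}$ with controlled boundary behaviour and exponential decay as $\mathrm{Re}\, y \to \pm\infty$. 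So the first — and main — step is to verify that $A_{2,h}$ has these properties with all relevant constants ($c_0$ for the lower bound, $\theta$ for the sector, hence $d$ and the resolvent constant) independent of $h$.

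This follows from the hypotheses on $a_2$. By \ref{cond:discrete-operator} we have $(A_{2,h}u,v)_H = a_2(u,v)$ for $u,v \in V_h \subseteq V$, so the numerical range of $A_{2,h}$ (on the complexification of $V_h$) is contained in that of $a_2$ on $V$. Since $a_2$ is continuous and coercive on $V$ with constants independent of $h$ (condition \ref{cond:continuity-coercivity}), its numerical range lies in a fixed set $\{ z : \mathrm{Re}\, z \ge c_0,\ \vert \arg z \vert \le \theta \}$ with $c_0 > 0$ and $\theta \in (0,\pi/2)$ depending only on the coercivity and continuity constants. In particular $\theta < \pi/2$, so $-e^y$ stays away from (an enlargement of) this set whenever $\vert \mathrm{Im}\, y \vert < d$ for a suitable $d > 0$, and the standard numerical-range resolvent estimate yields $\Vert (e^y I + A_{2,h})^{-1} \Vert_{L(H)} \le C$ on such a strip, with $C$ depending only on $c_0,\theta,d$, hence uniform in $h$. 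All operators involved act only on the finite-dimensional space $V_h$: $A_{2,h}^{-\gamma}$ is defined by \eqref{fractional-powers} (with $\lambda = 0$, admissible since $a_2$ is coercive) applied to the generator $-A_{2,h}$ of a variational semigroup on $V_h$, and $(e^{y_j}I + A_{2,h})^{-1}$ is a genuine inverse on $V_h$, so no interaction with the projection $\pi_h$ enters and the situation is identical in form to that of Lemma~\ref{lemma:quadrature}.

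The second step is then routine: one traces through the proof of Theorem 3.2 in \cite{2018-bonito-sinc} with $A_{2,h}$ in place of $A_2$. The truncation of the infinite sinc sum to the indices $-M,\dots,N$ fixed in \eqref{eq:quadrature-constants} (which depend on $\gamma$ and $k$ only) produces an error governed by the decay rates $\gamma$ and $1-\gamma$ of $e^{(1-\gamma)y}(e^y I + A_{2,h})^{-1}$ as $\mathrm{Re}\, y \to +\infty$ and $-\infty$ respectively, with prefactors bounded in terms of $c_0$; and the quadrature error of the sinc rule on the strip $\{ \vert \mathrm{Im}\, y \vert < d\}$ is estimated purely in terms of $\gamma$, $c_0$, $\theta$, $d$. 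All of these are now $h$-independent, so one obtains
\begin{align*}
    \Vert (A_{2,h}^{-\gamma} - Q_k^{-\gamma}(A_{2,h})) u \Vert_H \le C e^{-c/k} \Vert u \Vert_H, \qquad u \in V_h,
\end{align*}
with $C,c$ independent of $k$ and $h$, as claimed.

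The main obstacle is making the first step precise: one must confirm that the constants in \cite{2018-bonito-sinc} depend on the operator only through the sectoriality data, and in particular that the exponential rate $c$ does not deteriorate as $h \to 0$. This is expected, since although the spectrum of $A_{2,h}$ becomes unbounded as $h \to 0$ and lacks the eigenvalue asymptotics of $A_2$, it retains a fixed positive lower bound and a fixed sector, and the sinc error analysis uses nothing more. A careful reading of that proof (and of the sinc-quadrature convergence estimate it invokes) confirms this; alternatively one may simply reproduce the argument here directly from the uniform bounds established above.
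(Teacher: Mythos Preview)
Your proposal is correct and follows essentially the same approach as the paper: both invoke Theorem~3.2 of \cite{2018-bonito-sinc} and observe that the constants there depend on $A_{2,h}$ only through the continuity and coercivity constants of the bilinear form $(A_{2,h}\cdot,\cdot)_H = a_2(\cdot,\cdot)|_{V_h\times V_h}$, which are inherited from $a_2$ and hence $h$-independent. The paper states this in two sentences, while you elaborate the underlying sectoriality and sinc-quadrature mechanics, but the substance is the same.
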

\begin{proof}
    As stated in Theorem 3.2 in \cite{2019-bonito}, the constant $C$ depends only on $\gamma$ and the continuity and coercivity constant of $(A_{2,h} \cdot, \cdot)_H$ on $V_h$, which is the same as that of $a_2(\cdot,\cdot)$. The constant $c$ is given explicitly in Theorem 3.2 in \cite{2019-bonito}, and does not depend on $h$.  
\end{proof}

\section{Numerical method and convergence results}

In this section we present our proposed finite element approximation, and state our convergence results. Our semidiscrete approximation is based on the SPDE \eqref{eq:mild-solution-semi-approx} with values in $V_h$. When $\gamma \in (0,1)$ we approximate the fractional power operator by the quadrature approximation \eqref{eq:quadrature}, in which case \eqref{eq:mild-solution-semi-approx} becomes
\begin{align}\label{eq:semidiscrete-gamma-in-0-1}
    d u_h = -A_{1,h} u_h \, dt + Q_k^{-\gamma}(A_{2,h}) \pi_h \, dW,
\end{align}
with convention $Q_k^{-1}(A_{2,h}) = A_{2,h}^{-1}$ and $Q_k^0(A_{2,h}) = I$. Discretizing \eqref{eq:semidiscrete-gamma-in-0-1} in time with backward Euler, we get our fully discrete approximation,
\begin{align}\label{eq:fully-discrete-scheme}
    (I + \Delta t A_{1,h})u_{h,\Delta t}(t_{n+1}) = u_{h,\Delta t}(t_n) + Q_k^{-\gamma}(A_{2,h}) \pi_h (W(t_{n+1}) - W(t_n)),
\end{align}
where $t_n = n \Delta t$.

In order to simulate from \eqref{eq:fully-discrete-scheme} we rewrite this equation as a system of equations for the coefficients of $u_{h,\Delta t}$ in the nodal basis of $V_h$. To that end, we denote this basis by $\varphi_j, \ j = 1, \dots, N_h$, and let
\begin{align}\label{eq:fem-matrices}
    (M_h)_{ij} = \int_{\mathcal D} \varphi_j \varphi_i \, dx, \quad (T_{h})_{ij} = a_{1,h}(\varphi_j, \varphi_i), \quad (K_h)_{ij} = a_{2,h}(\varphi_j, \varphi_i).
\end{align}
We may express $u_{h,\Delta t}(t_n) = \sum_{j = 1}^{N_h} \alpha_j^n \varphi_j$, for coefficients $\alpha_j^n$, and using Lemma \ref{lemma:discrete-cylindrical-wiener-process} (see Appendix \ref{app:derivation-of-system} for a detailed derivation), we note that \eqref{eq:fully-discrete-scheme} may be rewritten
\begin{align}\label{eq:scheme-basis-coefficients-1}
    (M_h + \Delta t T_h) \alpha^{n+1} = M_h \alpha^n + M_h \Delta t^{1/2} \frac{k \sin(\pi \gamma)}{\pi} \sum_{j = -M}^N e^{(1-\gamma) y_j} (e^{y_j} M_h + K_h)^{-1} M_h^{1/2} \varrho^n,
\end{align}
for $\gamma \in (0,1)$, and
\begin{align}\label{eq:scheme-basis-coefficients-2}
    (M_h + \Delta t T_{h}) \alpha^{n+1} = M_h \alpha^{n} + M_h \Delta t^{1/2} K_{h}^{-1} M_h^{1/2} \varrho^n,
\end{align}
for $\gamma = 1$. Here, $\varrho^n \sim \mathcal{N}(0,I)$ are $N_h$-dimensional Gaussian and independent for each $n$. 

The following theorem is the main result of this paper, and describes rates of strong convergence of our approximation \eqref{eq:fully-discrete-scheme}.
\begin{theorem}\label{theorem:strong-rate}
Suppose,
\begin{enumerate}
    \item Assumption \ref{assumption:model} and \ref{assumption:fem} hold, \medskip
    \item $\gamma \in (d/4 - 1/2, 1] \cap [0,1]$, and \medskip
    \item the quadrature resolution $k \leq -c(2 \gamma + 1)^{-1} \log(h)^{-1}$, where $c$ is as in Lemma \ref{lemma:A_h-quadrature},
\end{enumerate}
and let $u_{h,\Delta t}$ and $u$ be the solutions to \eqref{eq:fully-discrete-scheme} and \eqref{eq:mild-form}, respectively. Then for any $p \geq 2$, $\theta \in [0, 2 \gamma + 1 - d/2) \cap [0,2)$ and $\rho \leq \theta$, there is $C_{p,\theta}, c > 0$ such that
\begin{align*}
    \Vert u(t) - u_{h, \Delta t}(t) \Vert_{L^p(\Omega; H)} \leq C_{p,\theta} e^{c \lambda t} (h^{\theta} + \Delta t^{\theta / 2}) (1 + t^{-\theta / 2 + \rho / 2} \Vert (\lambda + A_1)^{\rho / 2} \xi \Vert_{L^p(\Omega ; H)}).
\end{align*}
\end{theorem}
The following corollary describes the pathwise convergence. 
\begin{corollary}\label{cor:pathwise-rate}
    Suppose the conditions of Theorem \ref{theorem:strong-rate} hold. Then, for any $\theta \in [0, 2\gamma + 1 - d/2) \cap [0,2]$, $\epsilon > 0$ small, and sequences $h_n, \Delta t_n$ such that
\begin{align*}
    \sum_{n = 1}^{\infty} (h_n^{\theta} + \Delta t_n^{\theta/2})^{p\epsilon} < \infty,
\end{align*} 
there is a random variable $M_{\theta, \epsilon, t} > 0$ such that
\begin{align*}
    \Vert u(t) - u_{h_n, \Delta t_n}(t) \Vert_H \leq M_{\theta, \epsilon, t} (h_n^{\theta} + \Delta t_n^{\theta / 2})^{1-\epsilon}, \quad P\text{-a.s.}
\end{align*}
\end{corollary}
\begin{proof}
    This follows by Theorem \ref{theorem:strong-rate} combined with Lemma \ref{lemma:pathwise} below. 
\end{proof}

\section{Proof of Theorem \ref{theorem:strong-rate}}

To prove Theorem \ref{theorem:strong-rate}, we need a couple of lemmas. Let $S_{h,\Delta t}(\cdot)$ be our fully discrete approximation of $S_1(\cdot)$ based on backward Euler, defined by
\begin{align}\label{eq:fully-discrete-semigroup-operator}
    S_{h,\Delta t}(t) :=
    \begin{cases}
        I, \quad &t = 0, \\
        \sum_{n = 0}^{N-1} r(\Delta t A_{1,h} )^{n+1} \chi_{(t_n,t_{n+1}]}(t), \quad &t > 0,
    \end{cases}
\end{align}
where $r(z) := (1 + z)^{-1}$, $\chi_D(\cdot)$ is the indicator function on $D$, and $N > 0$. The following lemma describes the error of this approximation. 
\begin{lemma}\label{lemma:fully-discrete-semigroup-approximation}
    Let $S_{h,\Delta t}(t)$ be given as in \eqref{eq:fully-discrete-semigroup-operator}. Under Assumption \ref{assumption:model} and \ref{assumption:fem}, there is $C, c, \delta > 0$ such that
    \begin{align*}
        \Vert (S_1(t) - S_{h,\Delta t}(t) \pi_h ) u_0 \Vert_H &\leq C e^{c (\lambda-\delta) t} t^{-\theta/2 + \rho/2} (h^{\theta} + \Delta t^{\theta / 2}) \\
        &\qquad \times \min(\Vert (\lambda + A_1)^{\rho/2} u_0 \Vert_H, \Vert A_2^{\rho / 2} u_0 \Vert_H),
    \end{align*}
    for $\theta \in [0,2]$, and $\rho \in [-1, \theta] \cap [-2 + \theta, \theta]$.
\end{lemma}
\begin{proof}
    This follows by Theorem 2.14 and 2.24 in \cite{2024-auestad}, noting that Assumption \ref{assumption:model} and \ref{assumption:fem} imply Assumption 2.1 and condition (A6)--(A9) in \cite{2024-auestad}. 
    
    We may replace $\lambda + A_1$ by $A_2$ since we can recover the interpolated inequality above by interpolating (see e.g. Section 2.8.3 and the Heinz--Kato inequality in \cite{yagi}) the operator $(S_1(t) - S_{h,\Delta t}(t) \pi_h ) (\lambda + A_1)^{\alpha}$ with $\alpha \in \{-1, 0, 1/2\}$, in which case, $(\lambda + A_1)^{\alpha}$ may be replaced by the corresponding powers of $A_2$ due to condition \ref{cond:A1A2}. 
\end{proof}
The next lemma describes a smoothing property for the fully discrete semigroup, $S_{h,\Delta t}(\cdot)$, and follows from Lemma \ref{lemma:analytic-semigroup}, \ref{lemma:fully-discrete-semigroup-approximation}, \ref{cond:fem-bound} and \ref{cond:A1A2}.
\begin{lemma}\label{lemma:fully-discrete-semigroup-smoothing}
    Under the conditions of Lemma \ref{lemma:fully-discrete-semigroup-approximation}, there is $C, \delta > 0$ such that
    \begin{align*}
        \Vert S_{h,\Delta t}(t) \pi_h A_2^{\alpha} \Vert_{L(H)} \leq C e^{c(\lambda - \delta) t} t^{-\alpha},
    \end{align*}
    for $\alpha \in [0, 1/2]$.
\end{lemma}
\begin{proof}
    By the proofs of Lemma 2.7 and 2.10 in \cite{2024-auestad}, condition \ref{cond:fem-bound} and \ref{cond:A1A2},
    \begin{align*}
        \Vert S_{h,\Delta t}(t) \pi_h A_2^{\alpha} \Vert_{L(H)} &\leq \Vert S_{h,\Delta t}(t) (\lambda + A_{1,h})^{\alpha} \pi_h \Vert_{L(H)} \Vert (\lambda + A_{1,h})^{-\alpha} \pi_h (\lambda + A_1)^{\alpha} \Vert_{L(H)} \\
        &\qquad \times \Vert (\lambda + A_1)^{-\alpha} A_2^{\alpha} \Vert_{L(H)} \\
        &\leq C e^{c (\lambda-\delta) t} t^{-\alpha},
    \end{align*}
    for $\alpha \in \{0,1/2\}$. Therefore, by interpolation, it holds for $\alpha \in [0,1/2]$.
\end{proof}
The following lemma is an error estimate for the semidiscrete approximation, $S_{2,h}(\cdot)$, of $S_2(\cdot)$. 
\begin{lemma}\label{lemma:new-norm-semidiscrete-error}
    Let $S_{2,h}(\cdot)$ be the analytic semigroup generated by $-A_{2,h}$ on $V_h$. Under Assumption \ref{assumption:model} and \ref{assumption:fem} there are $C, \delta > 0$ such that 
    \begin{align*}
        \Vert A_2^{\alpha} (S_2(t) - S_{2,h}(t) \pi_h) u_0 \Vert_{H} \leq C e^{-\delta t} t^{-\theta / 2 } h^{\theta - 2 \alpha} \Vert u_0 \Vert_H
    \end{align*}
    for any $\alpha \in [-1/2, 1/2]$, $\theta \in [2\alpha,2] \cap [0,2+2\alpha]$. 
\end{lemma}
\begin{proof}
    By similar arguments as in the proof of Lemma \ref{lemma:fully-discrete-semigroup-approximation}, this follows by Lemma 2.15 and 2.18 in \cite{2024-auestad}.
\end{proof}
The following lemma describes the error in our approximation of the stochastic convolution \eqref{eq:mild-form} due to the finite element approximation of the covariance operator.
\begin{lemma}\label{lemma:error-stochastic-convolution}
    Suppose the conditions of Theorem \ref{theorem:strong-rate} hold. Then for any $p \geq 2$ and $\theta \in [0, 2\gamma + 1 - d / 2) \cap [0,2]$, there is $C_{p,\theta}, c > 0$ such that
    \begin{align*}
        \Vert \int_0^t S_1(t-s) A_2^{-\gamma} \, dW - \int_0^t S_{h,\Delta t}(t-s) A_{2,h}^{-\gamma} \pi_h \, dW \Vert_{L^p(\Omega ; H)} \leq C_{p,\theta} e^{c \lambda t} (h^{\theta} + \Delta t^{\theta / 2}).
    \end{align*}
\end{lemma}
\begin{proof}
    We decompose the difference as follows
    \begin{align*}
        &\int_0^t S_1(t-s) A_2^{-\gamma} \, dW - \int_0^t S_{h,\Delta t}(t-s) A_{2,h}^{-\gamma} \pi_h \, dW \\
        &\qquad = \int_0^t (S_1(t-s) - S_{h,\Delta t}(t-s) \pi_h) A_2^{-\gamma} \, dW \quad (=: (i)) \\
        &\qquad \qquad + \int_0^t S_{h,\Delta t}(t-s) \pi_h (A_2^{-\gamma} - A_{2,h}^{-\gamma} \pi_h) \, dW. \quad (=: (ii))
    \end{align*}
    Using a version of the Burkholder--Davis--Gundy (BDG) inequality (see e.g. Theorem~4.36 in~\cite{da-prato}), we have for some $0 < \epsilon < (\gamma - d/4 + 1/2) / 2$
    \begin{align*}
        \Vert (i) \Vert_{L^p(\Omega ; H)}^2 &\leq C_p \int_0^t \Vert (S_1(t-s) - S_{h,\Delta t}(t-s) \pi_h) A_2^{-\gamma + d / 4 + \epsilon} A_2^{-d / 4 - \epsilon} \Vert_{L_2(H)}^2 \, ds \\
        &\leq C_p \int_0^t \Vert (S_1(t-s) - S_{h,\Delta t}(t-s) \pi_h) A_2^{-\gamma + d / 4 + \epsilon} \Vert_{L(H)}^2 \Vert A_2^{-d/4 - \epsilon} \Vert_{L_2(H)}^2 \, ds \\
        &\leq C_{p,\epsilon} e^{2 c \lambda t} \int_0^t ( e^{-\delta s} s^{-\theta / 2 + \gamma - d/4 - \epsilon} (h^{\theta} + \Delta t^{\theta / 2}))^2 \, ds \\
        &\leq C_{p,\epsilon} e^{2 c \lambda t} (h^{2 \gamma + 1 - d/2 - 4\epsilon} + \Delta t^{\gamma + 1 / 2 - d / 4 - 2\epsilon})^2,
    \end{align*}
    by Lemma \ref{lemma:fully-discrete-semigroup-approximation} with $\theta = 2\gamma + 1 - d/2 - 4\epsilon$ and \ref{cond:hilbert-schmidt}.
    
    For the second term, Lemma \ref{lemma:fully-discrete-semigroup-smoothing}, the BDG inequality (Theorem 4.36 in \cite{da-prato}), and the properties $\Vert L \Vert_{L_2(H)} = \Vert L^* \Vert_{L_2(H)}$, $\Vert L \Vert_{L(H)} = \Vert L^* \Vert_{L(H)}$ for any $L \in L_2(H)$ with adjoint $L^*$, gives for $0 < \epsilon < (\gamma - d / 4 + 1/2) / 2$
    \begin{align*}
        \Vert (ii) \Vert_{L^p(\Omega ; H)}^2 &\leq C_p \int_0^t \Vert S_{h,\Delta t}(t-s) \pi_h (A_2^{-\gamma} - A_{2,h}^{-\gamma} \pi_h) \Vert_{L_2(H)}^2 \, ds \\
        &= C_p \int_0^t \Vert S_{h,\Delta t}(t-s) \pi_h A_2^{1/2 - \epsilon} A_2^{-1/2 + \epsilon} (A_2^{-\gamma} - A_{2,h}^{-\gamma} \pi_h) \Vert_{L_2(H)}^2 \, ds \\
        &\leq C_p \int_0^t \Vert S_{h,\Delta t}(t-s) \pi_h A_2^{1/2 - \epsilon} \Vert_{L(H)}^2 \Vert A_2^{-1/2 + \epsilon} (A_2^{-\gamma} - A_{2,h}^{-\gamma} \pi_h) \Vert_{L_2(H)}^2 \, ds \\
        &\leq C_{p,\epsilon} e^{2 c \lambda t} \Vert A_2^{-1/2 + \epsilon} (A_2^{-\gamma} - A_{2,h}^{-\gamma} \pi_h) \Vert_{L_2(H)}^2 \\
        &= C_{p,\epsilon} e^{2 c \lambda t} \Vert (A_2^{d/4 -1/2 + 2\epsilon} (A_2^{-\gamma} - A_{2,h}^{-\gamma}) )^* (A_2^{-d/4-\epsilon})^* \Vert_{L_2(H)}^2 \\
        &\leq C_{p,\epsilon} e^{2 c \lambda t} \Vert A_2^{d/4 -1/2 + 2\epsilon} (A_2^{-\gamma} - A_{2,h}^{-\gamma}) \Vert_{L(H)}^2 \Vert A_2^{-d/4 - \epsilon} \Vert_{L_2(H)}^2 \\
        &\leq C_{p,\epsilon} e^{2 c \lambda t} \Vert A_2^{d/4 -1/2 + 2\epsilon} (A_2^{-\gamma} - A_{2,h}^{-\gamma}) \Vert_{L(H)}^2,
    \end{align*}
    due to condition \ref{cond:hilbert-schmidt}. By the definition of the negative fractional power \eqref{eq:fractional-powers} and Lemma \ref{lemma:new-norm-semidiscrete-error},
    \begin{align*}
        \Vert A_2^{d/4 -1/2 + 2\epsilon} (A_2^{-\gamma} - A_{2,h}^{-\gamma}) \Vert_{L(H)} &= \Vert A_2^{d/4 - 1/2 + 2 \epsilon} \int_0^{\infty} t^{-1 + \gamma} (S_2(t) - S_{2,h}(t) \pi_h) \, dt \Vert_{L(H)} \\
        &\leq \int_0^{\infty} t^{-1 + \gamma} \Vert A_2^{d/4 - 1/2 + 2 \epsilon} (S_2(t) - S_{2,h}(t) \pi_h) \Vert_{L(H)} \, dt \\
        &\leq C \int_0^{\infty} e^{-\delta t} t^{-1 + \gamma - \theta / 2} h^{\theta - 2(d/4-1/2+2\epsilon)} \, dt \\
        &\leq C_{\epsilon} h^{2 \gamma + 1 - d / 2 - 5 \epsilon},
    \end{align*}
    with $\theta = 2 \gamma - \epsilon$ in the last line, where we used that $A_2^{\alpha}$ is closed to pass it under the integral sign. This finishes the second term. 
\end{proof}

We are now ready to prove Theorem \ref{theorem:strong-rate}.
\begin{proof}[Proof of Theorem \ref{theorem:strong-rate}]
Our approximate mild solution \eqref{eq:fully-discrete-scheme} can be extended from discrete times to all times $t \geq 0$ as
\begin{align*}
    u_{h,\Delta t}(t) := S_{h,\Delta t}(t) \pi_h \xi + \int_0^t S_{h,\Delta t}(t-s) Q_k^{\gamma}(A_{2,h}) \pi_h \, dW.
\end{align*}
We decompose the error as follows
\begin{align*}
    u(t) - u_{h,\Delta t}(t) &= (S_1(t) - S_{h,\Delta t}(t) \pi_h) \xi \\
    &\qquad+ \bigg(\int_0^t S_1(t-s) A_2^{-\gamma} \, dW - \int_0^t S_{h,\Delta t}(t-s) A_{2,h}^{-\gamma} \pi_h \, dW \bigg) \\
    &\qquad+ \bigg(\int_0^t S_{h,\Delta t}(t-s) A_{2,h}^{-\gamma} \pi_h \, dW - \int_0^t S_{h,\Delta t}(t-s) Q_k^{-\gamma}(A_{2,h}) \pi_h \, dW \bigg) \\
    &=: (i) + (ii) + (iii).
\end{align*}
We have,
\begin{align*}
    \Vert (i) \Vert_{L^p(\Omega ; H)} \leq C e^{c \lambda t} t^{-\theta/2 + \rho/2} (h^{\theta} + \Delta t^{\theta/2}) \Vert (\lambda + A_1)^{\rho / 2} \xi \Vert_{L^p(\Omega ; H)},
\end{align*}
for any $\theta \in [0,2]$ and $\rho \leq \theta$, by Lemma \ref{lemma:fully-discrete-semigroup-approximation}. For $(ii)$, we use Lemma \ref{lemma:error-stochastic-convolution}. 

Finally for $(iii)$, we have using the BDG inequality (Theorem 4.36 in \cite{da-prato}), Lemma \ref{lemma:fully-discrete-semigroup-smoothing} and \ref{lemma:A_h-quadrature},
\begin{align*}
    &\Vert \int_0^t S_{h,\Delta t}(t-s) (A_{2,h}^{-\gamma} - Q_k^{-\gamma}(A_{2,h})) \pi_h \, dW \Vert_{L^p(\Omega; H)}^p \\
    &\qquad\leq C_p \bigg( \int_0^t \Vert S_{h,\Delta t}(t-s) (A_{2,h}^{-\gamma} - Q_k^{-\gamma}(A_{2,h})) \pi_h \Vert_{L_2(H)}^2 \, ds \bigg)^{p/2} \\
    &\qquad\leq C_p \bigg( \int_0^t e^{2c(\lambda - \delta) s} \Vert A_{2,h}^{-\gamma} - Q_k^{-\gamma}(A_{2,h}) \pi_h \Vert_{L_2(H)}^2 \, ds \bigg)^{p/2} \\
    &\qquad\leq C_p e^{p c \lambda t} \bigg( \int_0^t e^{-2c \delta s} \sum_{j=1}^{N_h} \Vert (A_{2,h}^{-\gamma} - Q_k^{-\gamma}(A_{2,h})) e_{h,j} \Vert_H^2 \, ds \bigg)^{p/2} \\
    &\qquad\leq C_p e^{p c \lambda t} (N_h^{1/2} e^{-c/k})^p, 
\end{align*}
where $\{ e_{h,j} \}_j$ is an $H$-orthonormal basis of $V_h$. Using the bound on $k$ in terms of $h$, the bound $N_h \leq C h^{-d}$ which follows from \ref{cond:quasi-uniform}, this term is bounded by $C (h^{2\gamma + 1 - d/2})^p$. If $\gamma = 1$, this term vanishes.

In total, we get,
\begin{align*}
    \Vert u(t) - u_{h,\Delta t}(t) \Vert_{L^p(\Omega ; H)} &\leq C_{p,\theta} e^{c \lambda t} (h^{\theta} + \Delta t^{\theta / 2}) (1 + t^{-\theta / 2 + \rho / 2} \Vert (\lambda + A_1)^{\rho / 2} \xi \Vert_{L^p(\Omega ; H)}),
\end{align*}
for any $\theta \in [0, 2\gamma + 1 - d/2) \cap [0,2]$, and $\rho \leq \theta$. 
\end{proof}

The final lemma is used to show that the strong convergence rate in Theorem \ref{theorem:strong-rate} gives the rate of pathwise convergence described in Corollary \ref{cor:pathwise-rate}.
\begin{lemma}\label{lemma:pathwise}
    Let $y_{\phi} : \Omega \to [0,\infty)$, $0 < \phi < 1$, be a collection of random variables, and suppose that for some $p > 0$ there is a constant $C > 0$ such that $E[y_{\phi}^p]^{1/p} \leq C \phi$. Then for any $\epsilon > 0$ small, and decreasing sequence $\phi_n \in (0,1)$ satisfying $\sum_{n = 1}^{\infty} \phi_n^{p \epsilon} < \infty$, there is $M_{\epsilon} : \Omega \to [0, \infty)$, ensuring that
    \begin{align*}
        y_{\phi_n} \leq M_{\epsilon} \phi_n^{1-\epsilon}, \quad P\text{-a.s.} 
    \end{align*}
\end{lemma}
\begin{proof}
    We will construct $M_{\epsilon}$ explicitly. For any $\epsilon > 0$, define $f_{\epsilon,n} := y_{\phi_n} / \phi_n^{1-\epsilon}$. Since,
    \begin{align*}
        P(f_{\epsilon,n} > 1) \leq E[f_{\epsilon,n}^p] \leq C \phi_n^{p \epsilon}, \quad \text{ and therefore } \quad  \sum_{n = 1}^{\infty} P(f_{\epsilon,n} > 1) < \infty,
    \end{align*}
    we have that $P(\limsup_{n \to \infty } \{ f_{\epsilon,n} > 1 \}) = 0$ by the Borel--Cantelli lemma. Therefore, $\sup_n f_{\epsilon,n} < \infty$ $P$-a.s., and defining
    \begin{align*}
        M_{\epsilon}(\omega) := 
        \begin{cases}
        \sup_{n \in \mathbb{N}} f_{n,\epsilon}(\omega), \quad &\text{when }\sup_{n \in \mathbb{N}} f_{n,\epsilon}(\omega) < \infty, \\
        0, \quad &\text{otherwise},
        \end{cases}
    \end{align*}
    we find $y_{\phi_n} = f_{\epsilon,n} \phi^{1-\epsilon} \leq M_{\epsilon} \phi^{1-\epsilon}$ $P\text{-a.s.}$
\end{proof}

\section{Numerical example and verification of convergence rate}

We numerically verify the convergence rates obtained in Theorem \ref{theorem:strong-rate} for the model,
\begin{equation}\label{ga:example:eq:SPDE}
    du = \Delta u \, dt +(I - \Delta)^{-\gamma} \, dW, \quad u(0) = 0,
\end{equation}
where $\Delta$ has zero Dirichlet boundary conditions. In Example \ref{ga:example1} and \ref{ga:example2}, we consider the domains $\mathcal{D} = (0,1)$ and $\mathcal{D} = (0,1)^2$, respectively. We let $H = L^2(\mathcal{D})$, and approximate the relative strong error at time $t = 1$ by
\begin{equation}\label{ga:example:eq:error}
    e_{h,\Delta t} := \frac{\Vert u_{h,\Delta t}(1)-u_{\tilde{h},\widetilde{\Delta t}}(1) \Vert_{L^2(\Omega;H)}}{\Vert u_{\tilde{h},\widetilde{\Delta t}}(1)\Vert_{L^2(\Omega;H)}},
\end{equation}
where a coarse approximation, $u_{h, \Delta t}(1)$, is compared to a reference solution, $u_{\tilde{h}, \widetilde{\Delta t}}(1)$, based on a finer spatial and temporal resolution, $\tilde{h}$ and $\widetilde{\Delta t}$, respectively. The quadrature resolution $k$ is fixed as described in each example below, and dependence on the quadrature resolution is suppressed in the notation.

To assess strong convergence, we proceed as follows. For the reference resolutions, there are $N_{\tilde{h}}$ nodal basis functions, denoted $\tilde{\varphi}_1, \dots, \tilde{\varphi}_{N_{\tilde{h}}}$, and 
$\widetilde{N} = \widetilde{\Delta t}^{-1}$ time steps, with $\tilde{t}_n := n / \widetilde{N}$, $n = 0, \ldots, \widetilde{N}$. We simulate the coefficients of the nodal basis functions of the increments of the projected Wiener process for each time step by
\begin{align*}
    \tilde{\delta}^n = M_{\tilde{h}}^{-1}
    \begin{pmatrix}
    (\pi_{\tilde{h}}(W(\tilde{t}_n)-W(\tilde{t}_{n-1})), \tilde{\varphi}_1)_H \\
    \vdots \\
    (\pi_{\tilde{h}}(W(\tilde{t}_n)-W(\tilde{t}_{n-1})), \tilde{\varphi}_{N_{\tilde{h}}})_H
    \end{pmatrix}
    \sim \mathcal{N}(0, \Delta t M_{\tilde{h}}^{-1}), \quad n = 1, \ldots, \widetilde{N},
\end{align*}
where $M_{\tilde{h}}$ is the mass matrix (see \eqref{eq:fem-matrices}). With the simulated increments $\pi_{\tilde{h}}(W(\tilde{t}_n)-W(\tilde{t}_{n-1})) = \sum_{j=1}^{N_{\tilde{h}}}\tilde{\delta}_i^n \tilde{\varphi}_i$, $n = 1, \dots, \widetilde{N}$, we can compute \eqref{eq:scheme-basis-coefficients-1} and \eqref{eq:scheme-basis-coefficients-2}---note in particular that $\varrho^n = \Delta t^{-1/2} M_{\tilde{h}}^{1/2}\tilde{\delta}^n$. 

For any coarser temporal grid, with $\{t_1, \ldots, t_N\} \subseteq \{\tilde{t}_1, \dots, \tilde{t}_{\widetilde{N}}\}$, increments are constructed by summing increments from the reference temporal grid. For any coarser spatial mesh, with $\{\varphi_1, \dots, \varphi_{N_h}\} \subseteq \mathrm{span}\{\tilde{\varphi}_1, \dots, \tilde{\varphi}_{N_{\tilde{h}}}\}$, we can express the coefficients of the Wiener process projected onto the coarser finite element space by $\delta^n = M_h^{-1}A M_{\tilde{h}} \tilde{\delta}^n$, where $A_{ij} = \varphi_i(\tilde{x}_j)$, with $\tilde{x}_j$ the vertex corresponding to $\tilde{\varphi}_j$, and $M_h$ is the mass matrix based on the coarser mesh.

Monte Carlo estimates of the errors are computed as follows. We simulate $R$ realizations of the reference and coarse solution, $u_{\tilde{h},\widetilde{\Delta t}}^{(r)}(1) = \sum_{i = 1}^{N_{\tilde{h}}} \tilde{\alpha}^{(r)}\tilde{\varphi}_i$ and $u_{h,\Delta t}^{(r)}(1) = \sum_{i = 1}^{N_h}\alpha^{(r)} \varphi_i$, respectively. The Monte Carlo estimate of the error, $\hat{e}_{h,\Delta t}$, in \eqref{ga:example:eq:error} is defined by
\begin{align}
    \hat{e}_{h,\Delta t}^2 := \frac{\frac{1}{R} \sum_{r = 1}^R(A^T\alpha^{(r)}-\tilde{\alpha}^{(r)})^T M_{\tilde{h}}(A^T\alpha^{(r)}-\tilde{\alpha}^{(r)})}{\frac{1}{R} \sum_{r = 1}^R (\tilde{\alpha}^{(r)})^T M_{\tilde{h}} \tilde{\alpha}^{(r)}}. \label{ga:example:eq:MonteCarloError}
\end{align}
In \eqref{ga:example:eq:MonteCarloError} the $H$-error is computed exactly, and so the only error in approximating \eqref{ga:example:eq:error} is due to the Monte Carlo error.

\begin{example}\label{ga:example1}
We consider \eqref{ga:example:eq:SPDE} with $\mathcal{D} = (0,1)$, $k = 0.5$, $\widetilde{\Delta t} = 2^{-22}$ and $\tilde{h} = 2^{-11}$. Coarser solutions are computed at spatial resolutions $h \in \{2^{-4}, \dots, 2^{-9}\}$ and temporal resolutions $\Delta t \in \{2^{-14}, \dots, 2^{-20}\}$. The Monte Carlo estimates of the errors in \eqref{ga:example:eq:MonteCarloError} are computed using $R = 50$ realizations. 

The first row in Figure \ref{ga:example1:fig:rates} shows the numerical convergence rates for $\gamma = 0$, $\gamma = 0.25$, $\gamma = 0.5$ and $\gamma = 0.75$, together with corresponding theoretical rates from Theorem \ref{theorem:strong-rate}. The figure shows good correspondence between numerical and theoretical rates.

\begin{figure}
    \centering
    \subcaptionbox{Dashed lines show rates $\frac{1}{2}, 1, \frac{3}{2}, 2$.}{
        \includegraphics[width = 6.5cm]{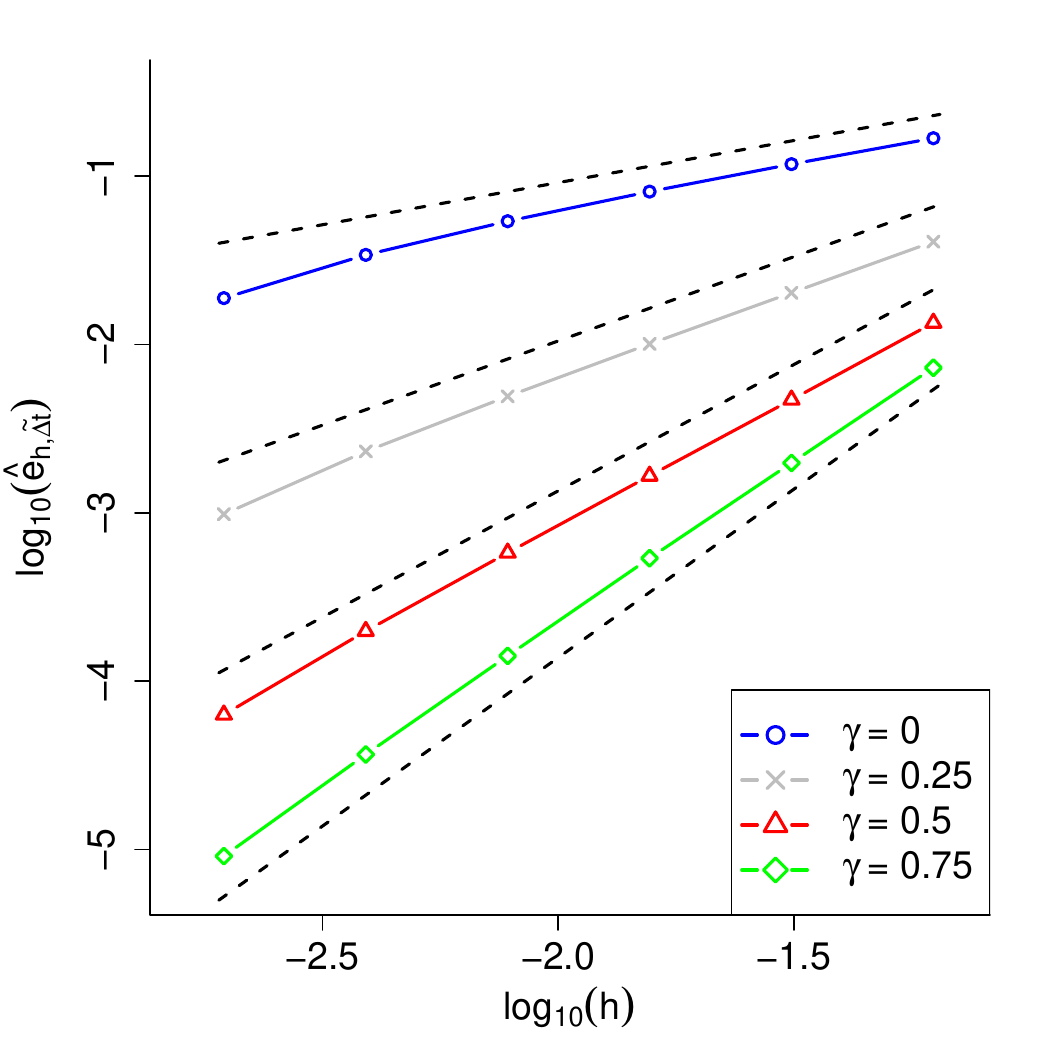}
    }
    \subcaptionbox{Dashed lines show rates $\frac{1}{4},\frac{1}{2},\frac{3}{4}, 1$}{
        \includegraphics[width = 6.5cm]{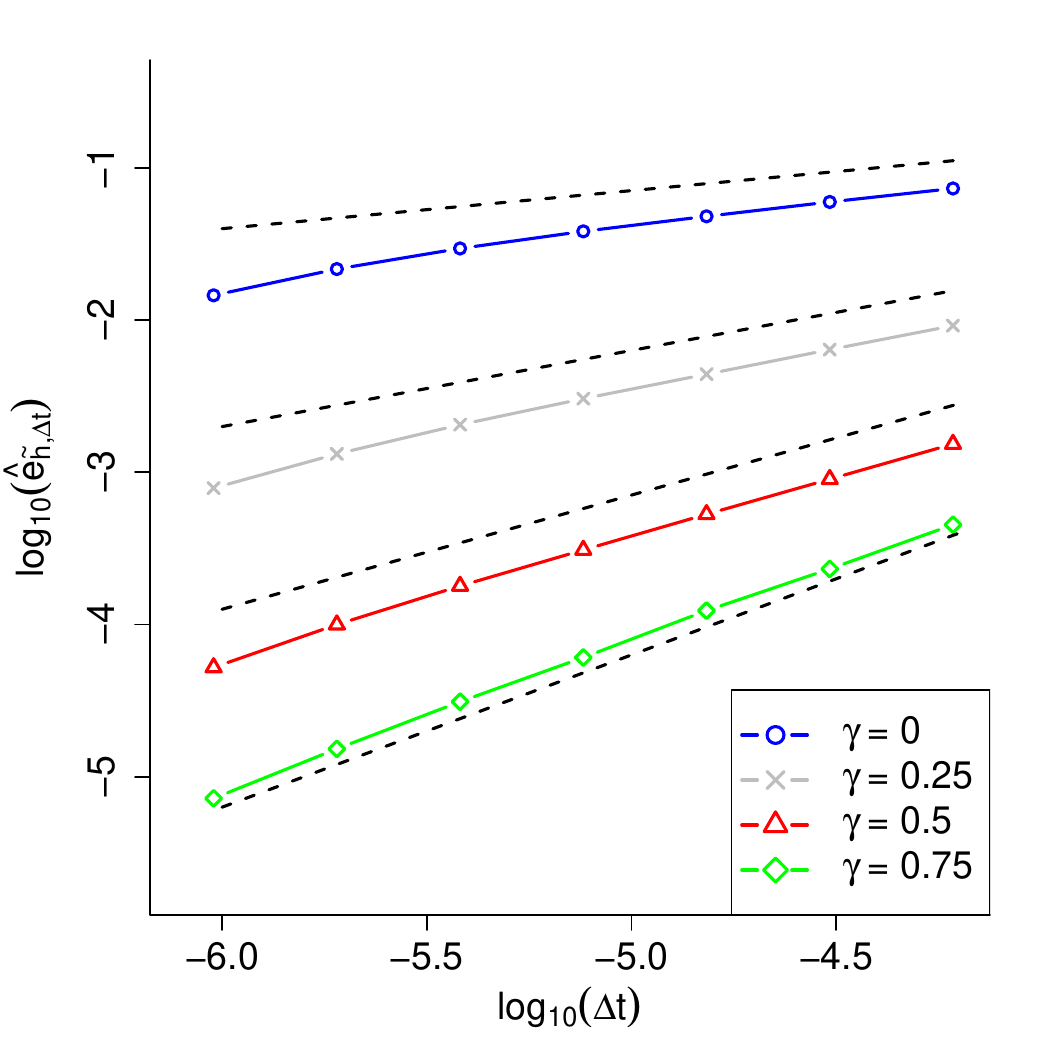}
    } \\
    \subcaptionbox{Dashed lines show rates $\frac{1}{2},1,\frac{3}{2},2$}{
        \includegraphics[width = 6.5cm]{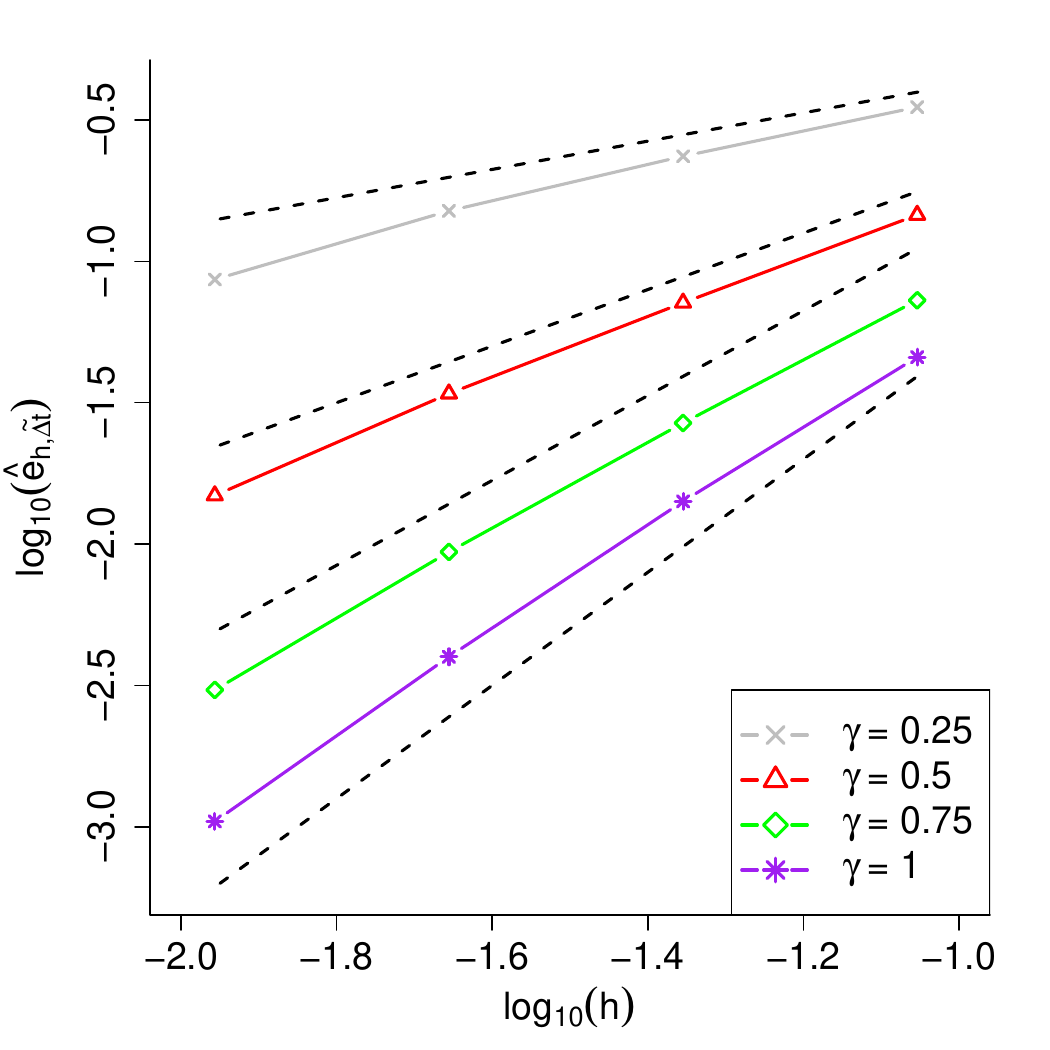}
    }
    \subcaptionbox{Dashed lines show rates $\frac{1}{4},\frac{1}{2},\frac{3}{4},1$.}{
        \includegraphics[width = 6.5cm]{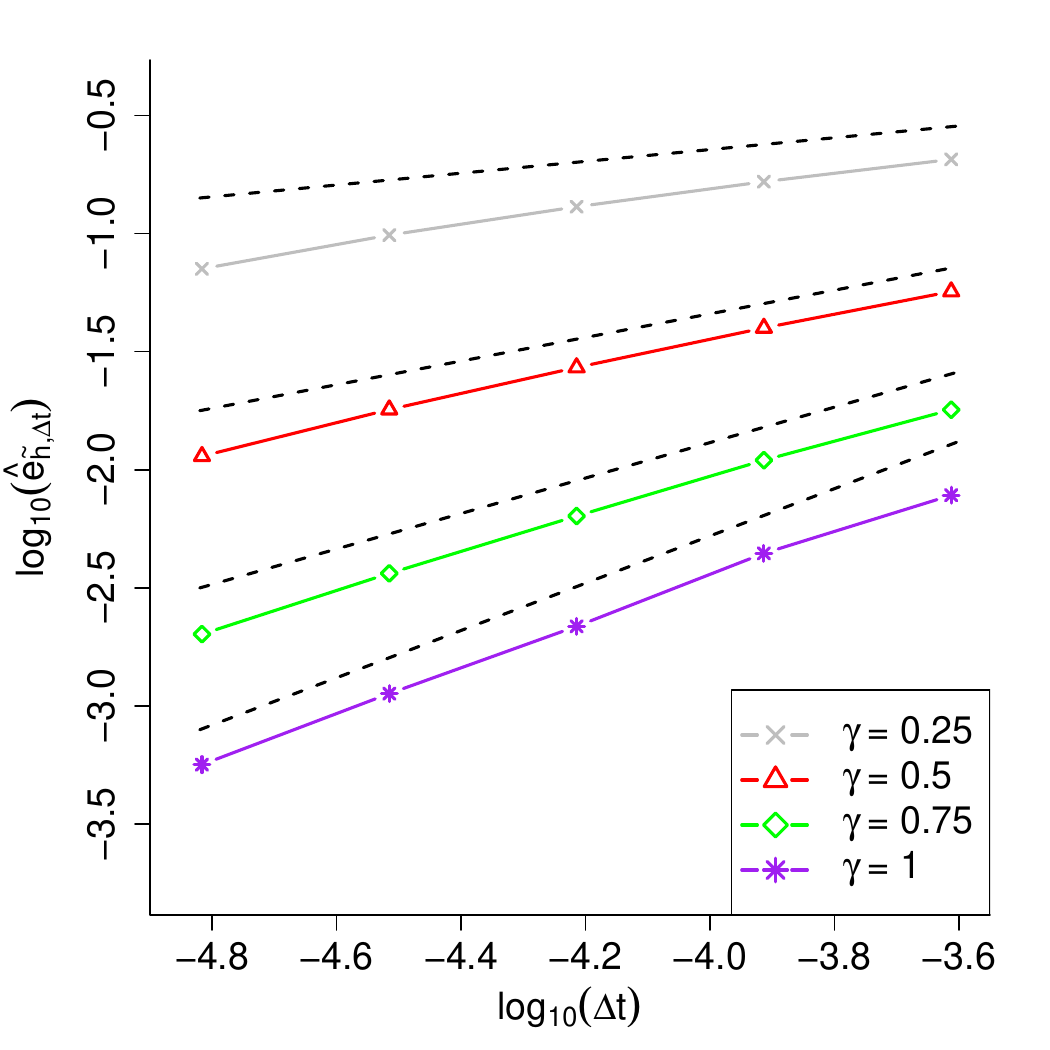}
    }
    \caption{Relative errors. Row 1 and 2 corresponds to Example 1 and 2, while Column 1 and 2 show rates in space and time, respectively. The dashed lines show corresponding theoretical asymptotic rates.\label{ga:example1:fig:rates}} 
\end{figure}

\end{example}

\begin{example}\label{ga:example2}
We consider \eqref{ga:example:eq:SPDE} with $\mathcal{D} = (0,1)^2$, $k = 0.5$, $\widetilde{\Delta t} = 2^{-19}$ and $\tilde{h} = 2^{-8.5}$. The mesh for the reference solution is created by dividing $\mathcal{D} = (0,1)^2$ into a uniform grid of $2^9 \times 2^9$ squares, and then dividing each square into two triangles with a diagonal line from the top-left corner to the bottom-right corner. Coarser solutions are computed at spatial resolutions $h \in \{2^{-3.5}, \dots, 2^{-6.5}\}$, and temporal resolutions $\Delta t \in \{2^{-12}, \dots, 2^{-16}\}$. The Monte Carlo estimates of the errors in \eqref{ga:example:eq:MonteCarloError} are computed using $R = 5$ realizations. We speed up computations by taking advantage of the fact that in this example $(1-\Delta)^{-\gamma}$ and $\Delta$ commute so that $(1-\Delta)^{-\gamma}$ only needs to be applied at the final time.

The second row in Figure \ref{ga:example1:fig:rates} shows the numerical convergence rates for $\gamma = 0.25$, $\gamma = 0.5$, $\gamma = 0.75$ and $\gamma = 1$, together with corresponding theoretical rates from Theorem \ref{theorem:strong-rate}. The figure demonstrates the decrease in rate of $0.5$ in space and $0.25$ in time compared to Example \ref{ga:example1} due to the dimension of the domain being increased by 1.
\end{example}

\begin{example}\label{ga:example3}
In our final example we consider a model with spatially varying coefficients. Stochastic advection-diffusion-reaction equations driven by Wiener processes with Whittle--Matérn spatial covariance are an active area of research in spatio-temporal statistics \cite{2024-lindgren,2024-clarotto,2024-berild}. Generalizing the SPDE in \cite{2024-berild} to allow for fractional power Whittle--Matérn noise, and inspired by the parametrization in \cite{2024-lindgren}, we consider
\begin{align}\label{ga:eq:NonStatSPDE}
    \beta_1\, du = -(\beta_2^2-\nabla\cdot (a + vv^\mathrm{T})\nabla+b\cdot\nabla ) u\,dt+\beta_3^{-1}(\beta_2^2-\nabla\cdot (a + vv^\mathrm{T})\nabla)^{-\gamma}\,dW, \quad u(0) = 0,
\end{align}
with domain $\mathcal{D} = (0,1)^2$, zero Dirichlet boundary conditions, and parameters
\begin{center}
    \begin{tabular}{ccccccc}
    $\gamma$ & $\beta_1$ & $\beta_2$ & $a$ & $v$ & $b$ & $\beta_3$ \\
    \hline
    0.25 & 10 & 8 & 1 & $\begin{pmatrix}-8(y-0.5)\\8(x-0.5)\end{pmatrix}$ & $ \begin{pmatrix}200\\0\end{pmatrix}$ & $2\cdot 10^{-2}$ \\[0.4cm]
    0.75 & 30 & 14 & 1 & $\begin{pmatrix}-8(y-0.5)\\8(x-0.5)\end{pmatrix}$ & $ \begin{pmatrix}200\\0\end{pmatrix}$ & $5.6\cdot 10^{-4}$
    \end{tabular}
\end{center}
where the aim is to create strong dependence in circles around the center $(0.5,0.5)^\mathrm{T}$ together with strong transport in the positive $x$-direction. The former is achieved through spatial anisotropy in the driving noise and anisotropic diffusion. The latter is achieved through advection. The other values were chosen to achieve reasonable correlation ranges in space and time, and marginal variances of order $1$.

Figure \ref{ga:example3:fig:NonStat} displays realizations of the solution at three different time points. The mesh is created as in Example \ref{ga:example2} with $h = 2^{-6.5}$, $\Delta t = 2^{-12}$, $k = 0.5$. For $\gamma = 0.25$, spatial regularity is $0.5$ resulting in a somewhat rough appearance in space, while for $\gamma = 0.75$, spatial regularity is $1.5$ resulting in a smoother appearance.
\end{example}

\subsection*{Acknowledgements}

The research of ØSA, GAF and ERJ was supported by the project IMod---Partial differential equations, statistics and data: an interdisciplinary approach to data-based modelling, project number 325114, from the Research Council of Norway. AL's work was supported in part by the Swedish Research Council (VR) through grant no.\ 2020-04170, by the Wallenberg AI, Autonomous Systems and Software Program (WASP) funded by the Knut and Alice Wallenberg Foundation, by the Chalmers AI Research Centre (CHAIR), and by the European Union (ERC, StochMan, 101088589). Views and opinions expressed are however those of the author(s) only and do not necessarily reflect those of the European Union or the European Research Council Executive Agency. Neither the European Union nor the granting authority can be held responsible for them.

\addcontentsline{toc}{section}{References}
\bibliographystyle{abbrv}
\bibliography{refs.bib}

\appendix
\section{Proofs of mild solution existence, and space and time regularity}\label{app:prelim-proofs}

\begin{proof}[Proof of Lemma \ref{lemma:mild-solution-existence}]
$S(t - \cdot) A_2^{-\gamma}$ is an admissible integrand provided, 
\begin{align}\label{eq:admissible-integrand-condition}
    \int_0^t \Vert S_1(t - s) A_2^{-\gamma} \Vert_{L_2(H)}^2 \, ds < \infty.
\end{align}
To show that \eqref{eq:admissible-integrand-condition} holds for any $\gamma > d/4 - 1/2$, we first note that owing to condition \ref{cond:A1A2} and Lemma \ref{lemma:analytic-semigroup} (a), we have
\begin{align*}
    \Vert S_1(t) A_2^{1/2} \Vert_{L(H)} &= \Vert (\lambda + A_1)^{1/2} S_1(t) (\lambda + A_1)^{-1/2} A_2^{1/2} \Vert_{L(H)} \\
    &\leq \Vert (\lambda + A_1)^{1/2} S_1(t) \Vert_{L(H)} \Vert (\lambda + A_1)^{-1/2} A_2^{1/2} \Vert_{L(H)} \\
    &\leq C e^{(\lambda - \delta) t} t^{-1/2}.
\end{align*}
By using Lemma 2.5 in \cite{2024-auestad}, we find that $\Vert S_1(t) A_2^{\alpha} \Vert_{L(H)} \leq C e^{(\lambda - \delta) t} t^{-\alpha}$, for $\alpha \in [0,1/2]$. Therefore, for any $0 < \epsilon < \gamma + 1 / 2 - d / 4$
\begin{align*}
    \int_0^t \Vert S_1(t-s) A_2^{-\gamma} \Vert_{L_2(H)}^2 \, ds &= \int_0^t \Vert S_1(t-s) A_2^{1/2 - \epsilon} A_2^{-1/2 + \epsilon - \gamma} \Vert_{L_2(H)}^2 \, ds \\
    &\leq \int_0^t \Vert S_1(t-s) A_2^{1/2 - \epsilon} \Vert_{L(H)}^2 \Vert A_2^{-1/2 + \epsilon - \gamma} \Vert_{L_2(H)}^2 \, ds \\
    &\leq C e^{2\lambda t} \int_0^t e^{-2\delta s} s^{-1 + 2\epsilon} \Vert A_2^{-1/2 + \epsilon - \gamma} \Vert_{L_2(H)}^2 \, ds \\
    &\leq C_{\epsilon} e^{2\lambda t} \Vert A_2^{-1/2 + \epsilon - \gamma} \Vert_{L_2(H)}^2 < \infty, 
\end{align*}
by condition \ref{cond:hilbert-schmidt}. 

The bound on $\Vert u(t) \Vert_{L^p(\Omega ; H)}$ follows by combining the observations above with the BDG inequality (Theorem 4.36 in \cite{da-prato}), and noting that by Lemma \ref{lemma:analytic-semigroup}, 
\begin{align*}
    \Vert S_1(t) \xi \Vert_{L^p(\Omega ; H)} &\leq E[\Vert (\lambda + A_1)^{-\rho} S_1(t) \Vert_{L(H)}^p \Vert (\lambda + A_1)^{\rho} \xi \Vert_H^p]^{1/p} \\ 
    &\leq C e^{(\lambda - \delta) t} t^{\rho} \Vert (\lambda + A_1)^{\rho} \xi \Vert_{L^p(\Omega ; H)},
\end{align*}
for any $\rho \leq 0$. 
\end{proof}

\begin{proof}[Proof of Lemma \ref{lemma:space-regularity-mild-solution}]
    Using that $(\lambda + A_1)^{\alpha}$ is closed, we find 
    \begin{align*}
        (\lambda + A_1)^{\alpha} u(t) = (\lambda + A_1)^{\alpha} S_1(t) \xi + \int_0^t (\lambda + A_1)^{\alpha} S_1(t-s) A_2^{-\gamma} \, dW.
    \end{align*}
    For the first term above, we have by Lemma \ref{lemma:analytic-semigroup},
    \begin{align*}
        \Vert (\lambda + A_1)^{\alpha} S_1(t) \xi \Vert_{L^p(\Omega ; H)} &\leq E[\Vert (\lambda + A_1)^{\alpha-\rho} S_1(t) \Vert_{L(H)}^p \Vert (\lambda + A_1)^{\rho} \xi \Vert_H^p]^{1/p} \\
        &\leq C e^{(\lambda - \delta) t} t^{-\alpha + \rho} \Vert (\lambda + A_1)^{\rho} \xi \Vert_{L^p(\Omega ; H)}. 
    \end{align*}

    For the next term, we have using the BDG inequality (Theorem 4.36 in \cite{da-prato}),
    \begin{align*}
        \Vert \int_0^t (\lambda + A_1)^{\alpha} S_1(t-s) A_2^{-\gamma} \, dW \Vert_{L^p(\Omega ; H)}^p &\leq C_p \bigg( \int_0^t \Vert (\lambda + A_1)^{\alpha} S_1(t-s) A_2^{-\gamma} \Vert_{L_2(H)}^2 \, ds \bigg)^{p / 2}.
    \end{align*}
    Let $\epsilon > 0$ be small, and note that
    \begin{align*}
        \Vert (\lambda + A_1)^{\alpha} S_1(t-s) A_2^{-\gamma} \Vert_{L_2(H)}^2 &= \Vert (\lambda + A_1)^{\alpha} S_1(t-s) A_2^{-\gamma + d/4 + \epsilon} A_2^{-d/4 - \epsilon} \Vert_{L_2(H)}^2 \\
        &\leq \Vert (\lambda + A_1)^{\alpha} S_1(t-s) A_2^{-\gamma + d/4 + \epsilon} \Vert_{L(H)}^2 \Vert A_2^{-d/4 + \epsilon} \Vert_{L_2(H)}^2,
    \end{align*}
    with the second factor being finite by \ref{cond:hilbert-schmidt}. For the first factor, note that for any $\alpha \geq 0$ we have by Lemma \ref{lemma:analytic-semigroup} and \ref{cond:A1A2}
    \begin{align*}
        \Vert (\lambda + A_1)^{\alpha} S_1(t) \Vert_{L(H)} &\leq C e^{(\lambda - \delta) t} t^{-\alpha}, \\
        \Vert (\lambda + A_1)^{\alpha} S_1(t) A_2^{-1} \Vert_{L(H)} &\leq \Vert (\lambda + A_1)^{\alpha - 1} S_1(t) \Vert_{L(H)} \Vert (\lambda + A_1) A_2^{-1} \Vert_{L(H)} \\
        &\leq C e^{(\lambda - \delta) t} t^{\min(0, -\alpha + 1)}, \\
        \Vert (\lambda + A_1)^{\alpha} S_1(t) A_2^{1/2} \Vert_{L(H)} &\leq \Vert (\lambda + A_1)^{\alpha + 1/2} S_1(t) \Vert_{L(H)} \Vert (\lambda + A_1)^{-1/2} A_2^{1/2} \Vert_{L(H)} \\ &\leq C e^{(\lambda - \delta) t} t^{-\alpha - 1/2}.
    \end{align*}
    Therefore, by interpolation (Lemma 2.5 in \cite{2024-auestad}), we find that for any $\beta \in (-\infty, 1/2]$
    \begin{align*}
        \Vert (\lambda + A_1)^{\alpha} S_1(t) A_2^{\beta} \Vert_{L(H)} \leq C e^{(\lambda - \delta) t} t^{\min(0, -\alpha - \max(-1, \beta))},
    \end{align*}
    where we used that negative fractional powers of $(\lambda + A_1)$ and $A_2$ are bounded. 
    
    Thus, for the first factor, we have
    \begin{align*}
        \Vert (\lambda + A_1)^{\alpha} S_1(t-s) A_2^{-\gamma + d/4 + \epsilon} \Vert_{L(H)}^2 \leq C e^{2(\lambda - \delta) t} t^{2\min(0, -\alpha - \max(-1, -\gamma + d / 4 + \epsilon))},
    \end{align*}
    and therefore need $-\alpha - \max(-1, -\gamma + d/4 + \epsilon) > -1/2$, which holds if $\alpha < \min(3 / 2, 1 / 2 + \gamma - d / 4 - \epsilon)$. By choosing $\epsilon$ smaller and depending on $\alpha$, we find that the integrand is integrable for any $\alpha < \min(3/2, 1 / 2 + \gamma - d / 4)$.
\end{proof}

\begin{proof}[Proof of Lemma \ref{lemma:time-regularity-mild-solution}]
    We decompose the difference as follows,
    \begin{align*}
        u(t_2) - u(t_1) &= (S_1(t_2) - S_1(t_1)) \xi + \bigg( \int_0^{t_2} S_1(t_2 - s) A_2^{-\gamma} \, dW - \int_0^{t_1} S_1(t_1 - s) A_2^{-\gamma} \, dW \bigg).
    \end{align*}
    For the first term, we have by Lemma \ref{lemma:analytic-semigroup}
    \begin{align*}
        &\Vert (S_1(t_2) - S_1(t_1)) \xi \Vert_{L^p(\Omega ; H)} = E[\Vert (S_1(t_2) - S_1(t_1)) \xi \Vert_H^p ]^{1/p} \\
        &\qquad = E[\Vert (S_1(t_2 - t_1) - I) (\lambda + A_1)^{-\alpha} (\lambda + A_1)^{\alpha - \rho} S_1(t_1) (\lambda + A_1)^{\rho} \xi \Vert_H^p]^{1/p} \\
        &\qquad \leq E[\Vert (S_1(t_2 - t_1) - I) (\lambda + A_1)^{-\alpha} \Vert_{L(H)}^p \Vert (\lambda + A_1)^{\alpha - \rho} S_1(t_1) \Vert_{L(H)}^p \Vert (\lambda + A_1)^{\rho} \xi \Vert_H^p]^{1/p} \\
        &\qquad \leq C e^{\lambda t_2 - \delta t_1} (t_2 - t_1)^{\alpha} t^{-\alpha + \rho} E[\Vert (\lambda + A_1)^{\rho} \xi \Vert_H^p]^{1/p},
    \end{align*}
    for some $\alpha \in [0,1]$ and $\rho \leq \alpha$. 

    For the second term, we have,
    \begin{align*}
        \int_0^{t_2} S_1(t_2 - s) A_2^{-\gamma} \, dW - \int_0^{t_1} S_1(t_1 - s) A_2^{-\gamma} \, dW &= \int_0^{t_1} (S_1(t_2-s) - S_1(t_1-s)) A_2^{-\gamma} \, dW \\
        &\qquad+ \int_{t_1}^{t_2} S_1(t_2 - s) A_2^{-\gamma} \, dW \\
        &= (i) + (ii).
    \end{align*}
    For $(i)$ we have using the BDG inequality (Theorem 4.36 in \cite{da-prato}),
    \begin{align*}
        &\Vert \int_0^{t_1} (S_1(t_2-s) - S_1(t_1-s)) A_2^{-\gamma} \, dW \Vert_{L^p(\Omega ; H)}^p \\
        &\qquad \leq C_p \bigg( \int_0^{t_1} \Vert (S_1(t_2-s) - S_1(t_1-s)) A_2^{-\gamma} \Vert_{L_2(H)}^2 \, ds \bigg)^{p / 2}.
    \end{align*}
    For the integrand, we have by condition \ref{cond:hilbert-schmidt} that for some $\epsilon > 0$ small,
    \begin{align*}
        &\Vert (S_1(t_2-s) - S_1(t_1-s)) A_2^{-\gamma} \Vert_{L_2(H)}^2 \\
        &\qquad = \Vert (S_1(t_2-t_1) - I) S_1(t_1 - s) A_2^{-\gamma + d/4 + \epsilon} A_2^{-d/4 - \epsilon} \Vert_{L_2(H)}^2 \\
        &\qquad \leq \Vert (S_1(t_2-t_1) - I) S_1(t_1 - s) A_2^{-\gamma + d/4 + \epsilon} \Vert_{L(H)}^2 \Vert A_2^{-d/4 - \epsilon} \Vert_{L_2(H)}^2 \\
        &\qquad \leq C_{\epsilon} \Vert (S_1(t_2-t_1) - I) S_1(t_1 - s) A_2^{-\gamma + d/4 + \epsilon} \Vert_{L(H)}^2.
    \end{align*}
    By Lemma \ref{lemma:analytic-semigroup} we note that for $t, \Delta t > 0$,
    \begin{align*}
        \Vert (S_1(\Delta t) - I) S_1(t) (\lambda + A_1)^{\alpha} \Vert_{L(H)} \leq C e^{(\lambda - \delta) t + \lambda \Delta t} t^{\min(0, -\beta - \alpha)} \Delta t^{\beta}, 
    \end{align*}
    for any $\alpha$, and $\beta \in [0,1]$. In the case of $\alpha \in \{ -1, 1/2 \}$ we can by condition \ref{cond:A1A2} replace the power of $\lambda + A_1$ by the same power of $A_2$ (by similar arguments as in the proof of Lemma \ref{lemma:space-regularity-mild-solution}), and by interpolation (Lemma 2.5 in \cite{2024-auestad}), we see that for $\alpha \leq 1/2$ and $\beta \in [0,1]$
    \begin{align*}
        \Vert (S_1(\Delta t) - I) S_1(t) A_2^{\alpha} \Vert_{L(H)} \leq C e^{(\lambda - \delta) t + \lambda \Delta t} t^{\min(0, -\beta - \max(-1,\alpha))} \Delta t^{\beta}.
    \end{align*}
    Therefore, setting $\Delta t = t_2 - t_1, \ t = t_1 - s$ and $\beta = \min(1, 1 / 2 - \epsilon - \max(-1,-\gamma + d / 4 + \epsilon)) = \min(1, \gamma + 1 / 2 - d / 4 - 2 \epsilon)$) above, we find
    \begin{align*}
        &\Vert (S_1(t_2-t_1) - I) S_1(t_1 - s) A_2^{-\gamma + d/4 + \epsilon} \Vert_{L(H)} \\
        &\qquad \leq C e^{\lambda (t_2 - s) - \delta (t_1 - s)} (t_1 - s)^{-1/2 + \epsilon} (t_2 - t_1)^{\min(1, \gamma + 1/2 - d/4 - 2\epsilon)},
    \end{align*}
    where we require $\epsilon < (\gamma + 1/2 - d/4) / 2$. Thus
    \begin{align*}
        \Vert (i) \Vert_{L^p(\Omega; H)} \leq C_{\epsilon} e^{\lambda t_2} (t_2 - t_1)^{\min(1, \gamma + 1/2 - d/4 - 2\epsilon)}.
    \end{align*}
    
    For $(ii)$ we have by the BDG inequality (Theorem 4.36 in \cite{da-prato}),
    \begin{align*}
        \Vert \int_{t_1}^{t_2} S_1(t_2 - s) A_2^{-\gamma} \, dW \Vert_{L^p(\Omega ; H)}^p \leq C_p \bigg( \int_{t_1}^{t_2} \Vert S_1(t_2 - s) A_2^{-\gamma} \Vert_{L_2(H)}^2 \, ds \bigg)^{p / 2}.
    \end{align*}
    For the integrand, we have by \ref{cond:hilbert-schmidt} and similar arguments as in the proof of Lemma \ref{lemma:mild-solution-existence}, that for some $0 < \epsilon < \gamma + 1 / 2 - d / 4$ small
    \begin{align*}
        &\Vert S_1(t_2 - s) A_2^{-\gamma + d / 4 + \epsilon} A_2^{-d/4 - \epsilon} \Vert_{L_2(H)}^2 \\
        &\qquad \leq \Vert S_1(t_2 - s) A_2^{-\gamma + d / 4 + \epsilon} \Vert_{L(H)}^2 \Vert A_2^{-d/4 - \epsilon} \Vert_{L_2(H)}^2 \\
        &\qquad \leq C_{\epsilon} e^{2(\lambda - \delta) (t_2 - s)}
        \begin{cases}
            (t_2 - s)^{2\gamma - d / 2 - 2\epsilon}, \quad &-\gamma + d / 4 + \epsilon > 0, \\
            1, \quad &\text{otherwise}. 
        \end{cases}
    \end{align*}
    By evaluating the integral, we therefore, have,
    \begin{align*}
        \Vert (ii) \Vert_{L^p(\Omega ; H)} \leq C_{\epsilon, p} e^{\lambda t_2}
        \begin{cases}
            (t_2 - t_1)^{\gamma + 1/2 - d / 4 - \epsilon}, \quad &-\gamma + d/4 + \epsilon > 0,\\
            (t_2 - t_1)^{1/2}, \quad &\text{otherwise}.
        \end{cases}
    \end{align*}
    
    Combining all of these inequalities finishes the proof of the inequality. The Hölder continuity follows by Kolmogorov continuity test (see e.g. Theorem 3.3 in \cite{da-prato}).
\end{proof}

\section{Derivation of (3.4) and (3.5)}\label{app:derivation-of-system}

To rewrite \eqref{eq:fully-discrete-scheme} as a system of equations for the coefficients of $u_{h,\Delta t}$ in the nodal basis, the following lemma is key. 
\begin{lemma}\label{lemma:discrete-cylindrical-wiener-process}
    Let $W$ be a cylindrical Wiener process on $H$. Then $\pi_h W$ is a cylindrical Wiener process on $V_h$, in the sense that,
    \begin{align*}
        \pi_h W (t) = \sum_{j = 1}^{N_h} \beta_j(t) e_{h,j}, \quad P\text{-a.s.}
    \end{align*}
    where $\beta_j$ are independent scalar Brownian motions, and $\{ e_{j,h} \}_{j = 1}^{N_h}$ is an $H$-orthonormal basis of $V_h$. 
\end{lemma}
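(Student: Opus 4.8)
The plan is to give $\pi_h W$ a rigorous meaning as an honest $V_h$-valued process by exploiting that $\pi_h$ is a finite-rank operator, and then to identify its coordinate processes with independent scalar Brownian motions via the L\'evy characterization theorem.

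First I would fix an $H$-orthonormal basis $\{e_{h,j}\}_{j=1}^{N_h}$ of $V_h$, which exists since $V_h \subseteq H$ is finite dimensional (apply Gram--Schmidt with respect to $(\cdot,\cdot)_H$ to, say, the nodal basis). Then $\pi_h x = \sum_{j=1}^{N_h} (x, e_{h,j})_H\, e_{h,j}$ for $x \in H$, and in particular $\pi_h$ is Hilbert--Schmidt from $H$ to $V_h$, so the It\^o integral $\int_0^t \pi_h \, dW$ against the cylindrical Wiener process $W$ is well defined; I take $\pi_h W(t) := \int_0^t \pi_h \, dW$ as the definition. By linearity of the stochastic integral,
\[
\pi_h W(t) = \int_0^t \pi_h \, dW = \sum_{j=1}^{N_h} \beta_j(t)\, e_{h,j}, \qquad \beta_j(t) := \int_0^t (e_{h,j}, \cdot)_H \, dW ,
\]
where each $\beta_j$ is the real-valued process obtained by integrating the constant scalar integrand ``$W$ tested against $e_{h,j} \in H$''. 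This is precisely the representation claimed in the lemma, so it only remains to show that the $\beta_j$ are independent scalar Brownian motions adapted to $\{\mathcal F_t\}_{t\in[0,T]}$.

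For that I would observe that each $\beta_j$ is a continuous $\{\mathcal F_t\}$-local martingale with $\beta_j(0)=0$, and that the It\^o isometry for the cylindrical Wiener process (covariance $I$) gives the cross quadratic variations
\[
\langle \beta_i, \beta_j \rangle_t = t\,(e_{h,i}, e_{h,j})_H = t\,\delta_{ij}, \qquad i,j = 1,\dots,N_h,
\]
using $H$-orthonormality of the basis. The multidimensional L\'evy characterization theorem then forces $(\beta_1,\dots,\beta_{N_h})$ to be a standard $N_h$-dimensional $\{\mathcal F_t\}$-Brownian motion; in particular the $\beta_j$ are independent scalar Brownian motions, which is the assertion. (Alternatively, independence can be read off from the jointly Gaussian structure of $W$ together with the vanishing covariances $\mathbb E[\beta_i(s)\beta_j(t)] = (s\wedge t)\,\delta_{ij}$.) Since $V_h$ is finite dimensional and the $e_{h,j}$ are $H$-orthonormal, the resulting process is simultaneously the (trivially well-defined) standard $V_h$-valued Wiener process, so ``$\pi_h W$ is a cylindrical Wiener process on $V_h$'' is just a restatement of this representation.

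The only genuine subtlety, and the step I would be most careful about, is the first one: attaching a precise meaning to ``$\pi_h W$'' even though $W$ itself does not take values in $H$. This is exactly where the finite-rank structure of $\pi_h$ is essential --- one could equivalently phrase it via the pointwise recipe $\pi_h W(t) := \sum_{j=1}^{N_h} W_t(e_{h,j})\, e_{h,j}$ in terms of the well-defined real Gaussian random variables $W_t(e_{h,j})$ --- after which everything reduces to the standard characterization of Brownian motion, with no further infinite-dimensional complications.
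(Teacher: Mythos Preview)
Your proof is correct and complete. The approach differs from the paper's in an instructive way: the paper first computes $\Vert \pi_h \Vert_{L_2(H)} = N_h^{1/2}$, then applies $\pi_h$ term-by-term to the series representation $W(t) = \sum_j \beta_j'(t) e_j$ to obtain a $Q$-Wiener process with covariance $Q = \pi_h \pi_h^* = \pi_h$, and finally invokes the spectral decomposition of $\pi_h$ (eigenvalues $1$ with multiplicity $N_h$, else $0$) together with the Karhunen--Lo\`eve expansion (Proposition~4.3 in da~Prato--Zabczyk) to extract the representation $\sum_{j=1}^{N_h} \beta_j(t) e_{h,j}$. You instead define $\pi_h W(t)$ as the stochastic integral $\int_0^t \pi_h\,dW$, read off the coordinate processes $\beta_j$ directly, and identify them via the L\'evy characterization (or the Gaussian covariance computation). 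Your route is more self-contained and avoids citing the Karhunen--Lo\`eve machinery; the paper's route makes the covariance structure of $\pi_h W$ more explicit and ties the result to standard $Q$-Wiener process theory. Both are equally valid, and the key observation---that $\pi_h$ is Hilbert--Schmidt so that $\pi_h W$ is a genuine $H$-valued process---is the same in each.
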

\begin{proof}
    We have that $\pi_h \in L_2(H, V_h)$ (where $V_h$ has the $H$-norm) since it has finite rank. It therefore follows that $\pi_h W(t) \in V_h$ $P$-a.s. (see e.g. Section 4.2.1 in \cite{da-prato}). Therefore, for any $H$-orthonormal basis $\{ e_{h,j} \}_{j = 1}^{N_h}$ of $V_h$ we must have
    \begin{align*}
        \pi_h W(t) = \sum_{j = 1}^{N_h} (\pi_h W(t), e_{h,j})_H e_{h,j}, \quad P\text{-a.s.}
    \end{align*}
    Moreover, $(\pi_h W(t), e_{h,j})_H$ are independent scalar Brownian motions for each $j$, since $(\pi_h W(t), e_{h,j})_H = (W(t), e_{h,j})_H$.
\end{proof}

In order to derive \eqref{eq:scheme-basis-coefficients-1} and \eqref{eq:scheme-basis-coefficients-2} we first note that solving \eqref{eq:fully-discrete-scheme} is the same as solving the system of equations, 
\begin{align}\label{eq:system-of-equations-appendix}
    ((I + \Delta t A_{1,h})u_{h,\Delta t}(t_{n+1}) , \varphi_j)_H = ( u_{h,\Delta t}(t_n) + Q_k^{-\gamma}(A_{2,h}) \pi_h (W(t_{n+1}) - W(t_n)), \varphi_j)_H,
\end{align}
$j = 1, \dots, N_h$, where $\{\varphi_j\}_{j = 1}^{N_h}$ is the nodal basis of $V_h$. To set up this system, the following lemma will be useful.
\begin{lemma}\label{lemma:auxiliary-basis-coef}
    For $\gamma = 1$, we have,
    \begin{align*}
        A_{2,h}^{-1} \pi_h (W(t_{n+1}) - W(t_n)) = \sum_{j=1}^{N_h} \theta_j \varphi_j, \quad P\text{-a.s.}
    \end{align*}
    where $\theta = \Delta t^{1/2} K_h^{-1} M_h^{1/2} \varrho^n$. Here, the matrices $M_h$ and $K_h$ are defined in \eqref{eq:fem-matrices}, $(\cdot)_j$ denotes the $j$'th entry of a vector,
    \begin{align*}
        \varrho^n := \Delta t^{-1/2} M_h^{-1 / 2}
        \begin{pmatrix}
            &(\pi_h (W(t_{n+1}) - W(t_n)), \varphi_1)_H \\
            &\vdots \\
            &(\pi_h (W(t_{n+1}) - W(t_n)), \varphi_{N_h})_H
        \end{pmatrix}
        \sim \mathcal{N}(0,I)
    \end{align*}
    are $N_h$-dimensional multivariate Gaussian. If $\gamma \in (0,1)$, we have,
    \begin{align*}
        Q_k^{-\gamma}(A_{2,h}) \pi_h (W(t_{n+1}) - W(t_n)) = \sum_{j=1}^{N_h} \theta_j' \varphi_j, \quad P\text{-a.s.}
    \end{align*}
    where $\theta' = \Delta t^{1/2} \frac{k \sin(\pi \gamma)}{\pi} \sum_{l = -M}^N e^{(1-\gamma) y_l} (e^{y_l} M_h + K_h)^{-1} M_h^{1/2} \varrho^n$. 
\end{lemma}
\begin{proof}
    Set for ease of notation,
    \begin{align*}
        f := \pi_h (W(t_{n+1}) - W(t_n)).
    \end{align*}
    By Lemma \ref{lemma:discrete-cylindrical-wiener-process}, $f$ is an $H$-valued Gaussian random variable with covariance operator $\Delta t \pi_h$. 

    For the first identity of the lemma, note that $A_{2,h}^{-1} f$ is the solution $g \in V_h$ of
    \begin{align*}
        A_{2,h} g = f.
    \end{align*}
    Since $f \in V_h$ $P$-a.s. by Lemma \ref{lemma:discrete-cylindrical-wiener-process}, solving the equation above is the same as solving the system of equations,
    \begin{align}\label{eq:system-1}
        \begin{pmatrix}
            (A_{2,h} g, \varphi_1)_H \\
            \vdots \\
            (A_{2,h} g, \varphi_{N_h})_H
        \end{pmatrix}
        =
        \begin{pmatrix}
            (f, \varphi_1)_H \\
            \vdots \\
            (f, \varphi_{N_h})_H
        \end{pmatrix}.
    \end{align}
    By Lemma \ref{lemma:discrete-cylindrical-wiener-process} we have,
    \begin{align*}
        E[(f, \varphi_i)_H (f, \varphi_j)_H] = \Delta t (\varphi_i, \varphi_j)_H,
    \end{align*}
    and so the covariance matrix of $((f, \varphi_1)_H, \dots, (f, \varphi_{N_h}))^T$ is the (scaled) mass matrix, $\Delta t M_h$. It follows that 
    \begin{align*}
        ((f, \varphi_1)_H, \dots, (f, \varphi_{N_h}))^T = \Delta t^{1/2} M_h^{1/2} \varrho^n,
    \end{align*}
    $P$-a.s., with $\varrho^n$ as above. For the left hand side of \eqref{eq:system-1} we insert $g = \sum_{j = 1}^{N_h} \theta_j \varphi_j$, and find, using that $(A_{2,h} \varphi_j, \varphi_i)_H = a_2(\varphi_j, \varphi_i)$,
    \begin{align*}
        \begin{pmatrix}
            (A_{2,h} g, \varphi_1)_H \\
            \vdots \\
            (A_{2,h} g, \varphi_{N_h})_H
        \end{pmatrix}
        = K_h 
        \begin{pmatrix}
            \theta_1 \\
            \vdots \\
            \theta_{N_h}
        \end{pmatrix},
    \end{align*}
    where $K_h$ is as in \eqref{eq:fem-matrices}. Combining these observations, we see that \eqref{eq:system-1} has solution $(g_1, \dots, g_{N_h})$ given by,
    \begin{align*}
        (g_1, \dots, g_{N_h})^T = \Delta t^{1/2} K_h^{-1} M_h^{1/2} \varrho^n.
    \end{align*}

    For the second identity of the lemma we argue similarly. Note that,
    \begin{align*}
        Q_k^{-\gamma}(A_{2,h}) f = \frac{k \sin(\pi \gamma)}{\pi} \sum_{j = -M}^N e^{(1-\gamma) y_j} g^{(j)}, 
    \end{align*}
    where $g^{(j)} \in V_h$, is the solution of the equation, 
    \begin{align*}
        (e^{y_j} I + A_{2,h}) g^{(j)} = f, \quad j = -M, \dots, N.
    \end{align*}
    As for the previous term, to solve this equation we insert $g^{(j)} = \sum_{l=1}^{N_h} \theta^{(j)}_l \varphi_l$ into the equation and integrate against the nodal basis, to find,
    \begin{align*}
        (e^{y_j} M_h + K_h) 
        \begin{pmatrix}
            \theta_1^{(j)} \\
            \vdots \\
            \theta_{N_h}^{(j)}
        \end{pmatrix}
        = \Delta t^{1/2} M_h^{1/2} \varrho^n.
    \end{align*}
    Summing up the vectors $(\theta^{(j)}_1, \dots, \theta^{(j)}_{N_h}), \ j = -M, \dots, N$ we find the coeffcients, $\theta'$, of $Q_k^{-\gamma} f$ in the nodal basis. This gives the second identity of the lemma. 
\end{proof}

Now we can insert the identities of Lemma \ref{lemma:auxiliary-basis-coef} into the system of equations \eqref{eq:system-of-equations-appendix} to arrive at \eqref{eq:scheme-basis-coefficients-1} and \eqref{eq:scheme-basis-coefficients-2}. Note that for any $g = \sum_{j=1}^{N_h} \theta_j \varphi_j$, we have,
\begin{align*}
    \begin{pmatrix}
        (g, \varphi_1)_H \\
        \vdots \\
        (g, \varphi_{N_h})_H
    \end{pmatrix}
    = M_h \theta,
\end{align*} 
and so,
\begin{align*}
    \begin{pmatrix}
        ((I + \Delta t A_{1,h})u_{h,\Delta t}(t_{n+1}), \varphi_1)_H \\
        \vdots \\
        ((I + \Delta t A_{1,h})u_{h,\Delta t}(t_{n+1}), \varphi_{N_h})_H
    \end{pmatrix}
    = (M_h + \Delta t T_{h}) \alpha^{n+1},
\end{align*} 
while, 
\begin{align*}
    &
    \begin{pmatrix}
        (Q_k^{-\gamma}(A_{2,h}) \pi_h (W(t_{n+1}) - W(t_n)), \varphi_1)_H \\
        \vdots \\
        (Q_k^{-\gamma}(A_{2,h}) \pi_h (W(t_{n+1}) - W(t_n)), \varphi_{N_h})_H
    \end{pmatrix}
    \\
    &\qquad= 
    \begin{cases}
        M_h \Delta t^{1/2} \frac{k \sin(\pi \gamma)}{\pi} \sum_{j = -M}^N e^{(1-\gamma) y_j} (e^{y_j} M_h + K_h)^{-1} M_h^{1/2} \varrho^n, \quad &\gamma \in (0,1), \\
        M_h \Delta t^{1/2} K_{h}^{-1} M_h^{1/2} \varrho^n, \quad &\gamma = 1.
    \end{cases}
\end{align*}

\end{document}